\numberwithin{equation}{section}
\def\ZZ{{\mathbb Z}}
\def\FF{{\mathbb F}}
\def\Hom{\operatorname{Hom}}
\def\pfi{{\varphi}}
\newtheorem{lemma}{Lemma}[section]
\newtheorem{corollary}[lemma]{Corollary}
\newtheorem{theorem}[lemma]{Theorem}
\newtheorem{proposition}[lemma]{Proposition}
\theoremstyle{definition}
\title{Duality and syzygies for semimodules over numerical semigroups}
\author{Julio Jos\'e Moyano-Fern\'andez}
\address{Institut f\"ur Mathematik, Universit\"at Osnabr\"uck. Albrechtstrasse 28a,
D-49076 Osnabr\"uck, Germany}
\email{jmoyano@uos.de}
\author{Jan Uliczka}
\address{Institut f\"ur Mathematik, Universit\"at Osnabr\"uck. Albrechtstrasse 28a,
D-49076 Osnabr\"uck, Germany}
\email{juliczka@uos.de}
\begin{document}

\subjclass[2010]{Primary 20M14; Secondary 05A19}
\keywords{Numerical semigroup, $\Gamma$-semimodule, dual semimodule, syzygy}
\thanks{The first author was partially supported by the Spanish Government Ministerio de Educaci\'on y Ciencia (MEC), grants MTM2007-64704 and MTM2012--36917--C03--03 in cooperation with the European Union in the framework of the founds ``FEDER''}

\maketitle

\begin{abstract}
Let $\Gamma=\langle \alpha, \beta \rangle$ be a numerical semigroup. 
In this article we consider the dual $\Delta^*$ of a $\Gamma$-semimodule $\Delta$; in particular we deduce a formula that expresses the minimal set of generators of $\Delta^*$ in terms of the generators of $\Delta$.  As applications we compute the minimal graded free resolution of a graded $\FF[t^{\alpha},t^{\beta}]$-submodule of $\FF[t]$, and we investigate the structure of the selfdual $\Gamma$-semimodules, leading to a new way of counting them.
\end{abstract}

\section{Introduction} \label{section1}

In our paper \cite{mu2} we investigated relations between $\langle \alpha ,\beta \rangle$-semimodules and lattice paths from $(0,\alpha)$ to $(\beta, 0)$. In particular we introduced the syzygy $\mathrm{Syz}(\Delta)$ of a semimodule $\Delta$. This concept seems to be related to the notion of syzygy in commutative algebra: An $\langle \alpha ,\beta \rangle$-semimodule $\Delta$ may be identified with a finitely generated graded submodule $M_{\Delta}$ of the polynomial ring $\FF[t]$ (over some field $\FF$) considered as a module over the subalgebra $\FF[t^{\alpha},t^{\beta}]$. The first syzygy of $M_{\Delta}$ is nonzero exactly in those degrees which are elements of $\mathrm{Syz}(\Delta)$. 
Further investigation of this relationship was the initial motivation for the current paper.
Another construction for semimodules, the so-called dual, turned out to be a useful tool for this task.
\medskip

The paper is organised as follows: In Section \ref{section2} some basic facts on homomorphisms of $\langle \alpha ,\beta \rangle$-semimodules are collected. The next section deals with the dual of a $\langle \alpha ,\beta \rangle$-semimodule; the central result (Theorem \ref{3T1}) is the description of a set  of generators for the dual $\Delta^{*}$ in terms of the minimal set of generators for $\Delta$. We continue with two applications of this result. In Section \ref{section4} we consider the above mentioned problem of commutative algebra and compute the minimal graded free resolution of an $\FF[t^{\alpha},t^{\beta}]$-module $M_{\Delta}$ associated to the semimodule $\Delta$. The second application is a counting of the selfdual semimodules (i.~e.~semimodules with $\Delta^* \cong \Delta$); here we recover a result of Gorsky and Mazin \cite{gm}. The paper closes with some observations concerning the interplay of the semimodule operations \emph{dual} and \emph{syzygy}.
\medskip

For the convenience of the reader we begin with a brief review of prerequisites and relevant material from our previous papers \cite{mu,mu2}. 
A \emph{numerical semigroup} $\Gamma$ is a submonoid of the additive monoid $\mathbb{N}$ such that $L:=\mathbb{N} \setminus \Gamma$ is a finite set. The elements of $L$ are called \emph{gaps} of $\Gamma$.  A \emph{$\Gamma$-semimodule} $\Delta$ is a non-empty subset of $\mathbb{Z}$ that is bounded below and satisfies $\Delta + \Gamma \subseteq \Delta$. A \emph{system of generators} of $\Delta$ is a subset $\mathcal{E}$ of $\Delta$ with
\[
\bigcup_{x \in \mathcal{E}} (x+\Gamma) = \Delta. 
\]
It is called \emph{minimal} if no proper subset of $\mathcal{E}$ generates $\Delta$. Every $\Gamma$-semimodule has a unique minimal system of generators $\mathcal{B}$; since $\Delta \setminus \Gamma$ is finite, so is $\mathcal{B}$. If $\min \Delta =0$ then the nonzero elements of $\mathcal{B}$ are gaps of $\Gamma$ and the positive difference of two such gaps is a gap as well. Subsets of $\mathbb{N}$ of this type are called \emph{$\Gamma$-lean}.
\medskip

We are mainly interested in numerical semigroups with two generators $\alpha < \beta$. In this case a lemma of Rosales \cite[Lemma 1]{rosales} yields a very useful characterisation for the numbers not contained in $\langle \alpha, \beta \rangle$: for $\ell \in \mathbb{Z}$ we have $\ell \notin \langle \alpha, \beta \rangle$ if and only if there exist $a, b \in \mathbb{N}_{>0}$ such that $\ell=\alpha \beta -a \alpha -b \beta$. 
Therefore the positive difference of two gaps $i_k=\alpha \beta -a_k \alpha -b_k \beta$, $k=1,2$ is a gap if and only if $a_1-a_2$ and $b_1-b_2$ bear different signs. Hence we can define a partial ordering in the set of gaps:
\[
i_1 <_L i_2 \ \ \ : \Longleftrightarrow \ \ \ a_1 > a_2 \wedge b_1 < b_2
\]
for $i_1,i_2 \in L$. In particular we may assume that the gaps in a $\Gamma$-lean set are ordered increasingly with respect to $<_L$.

\section{Homomorphisms of $\Gamma$-semimodules} \label{section2}

Let $\Delta$ and $\Delta'$ be $\Gamma$-semimodules.
A map $f:~\Delta \to \Delta'$ is called a \emph{homomorphism} of $\Gamma$-semimodules (or \emph{$\Gamma$-homomorphism}) if
\[ f(\gamma + x) = \gamma + f(x) \]
holds for any $\gamma \in \Gamma$ and $x \in \Delta$. It is easily seen that all homomorphisms of $\Gamma$-semimodules share the same structure:

\begin{lemma} A map  $f:~\Delta \to \Delta'$ between $\Gamma$-semimodules  is a $\Gamma$-homomorphism if and only if there exists $c \in \ZZ$ such that
$f(x) = x+c$ for all $x \in \Delta$. In particular, any homomorphism of $\Gamma$-semimodules is injective.
\end{lemma}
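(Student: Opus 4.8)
The plan is to prove the equivalence in the two directions, with essentially all the work in one of them. The implication from the existence of $c$ to the homomorphism property is immediate: if $f(x) = x + c$ for all $x \in \Delta$, then for any $\gamma \in \Gamma$ and $x \in \Delta$ one has $f(\gamma + x) = \gamma + x + c = \gamma + f(x)$, so $f$ is a $\Gamma$-homomorphism; and the map $x \mapsto x + c$ is visibly injective, which also settles the final assertion. Thus the entire content lies in the converse.

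So assume $f$ is a $\Gamma$-homomorphism. I want to show that the difference $f(x) - x$ does not depend on $x \in \Delta$, and to denote its common value by $c$. The key ingredient is that $\Gamma$ is a numerical semigroup, so that $\NN \setminus \Gamma$ is finite and $\Gamma$ contains every sufficiently large integer. Given two elements $x, y \in \Delta$, I would pick an integer $N$ large enough that both $N - x$ and $N - y$ lie in $\Gamma$; this is possible precisely because all large enough nonnegative integers belong to $\Gamma$. Setting $\gamma_x := N - x$ and $\gamma_y := N - y$, we obtain the two representations $x + \gamma_x = N = y + \gamma_y$ of the same element of $\Delta$.

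Applying the homomorphism property to both representations then gives
\[
\gamma_x + f(x) = f(x + \gamma_x) = f(N) = f(y + \gamma_y) = \gamma_y + f(y).
\]
Subtracting and using $\gamma_x - \gamma_y = y - x$ yields $f(x) - x = f(y) - y$. Since $x$ and $y$ were arbitrary, the difference is a constant $c \in \ZZ$, whence $f(x) = x + c$ for all $x \in \Delta$, and injectivity follows as already observed.

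The only genuine obstacle is to guarantee that the difference $f(x) - x$ is truly independent of the element $x$; this is exactly the point at which the defining property of a numerical semigroup, namely its cofiniteness in $\NN$, becomes indispensable, since it is what allows me to transport any two elements of $\Delta$ to a common value by the action of $\Gamma$. For an arbitrary submonoid of $\NN$ no such argument would be available, so the numerical hypothesis is used in an essential way rather than incidentally.
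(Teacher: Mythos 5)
Your proof is correct and follows essentially the same route as the paper: the forward direction and injectivity are the same immediate check, and for the converse you, like the paper, show $f(x)-x$ is constant by writing two elements of $\Delta$ as $x+\gamma_x = y+\gamma_y$ with $\gamma_x,\gamma_y \in \Gamma$ and applying the homomorphism property to both presentations. The only difference is cosmetic: where the paper merely asserts the existence of such $\gamma_1,\gamma_2$ from the finiteness of the gaps, you instantiate it explicitly by choosing a large common value $N$ with $N-x,\,N-y \in \Gamma$, which is a valid (and slightly more explicit) version of the same step.
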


\begin{proof} If $f(x) =  x+c$ holds for all $x \in \Delta$, then $f$  is a $\Gamma$-homomorphism, since  for any $\gamma \in \Gamma$ we have
\[ f(\gamma + x) = (\gamma +x) +c = \gamma + (x+c) = \gamma + f(x). \]
Conversely, let $f$ be a $\Gamma$-homomorphism. We show that the difference $f(x) - x$ does not depend on $x \in \Delta$: Let $x_1,x_2 \in \Delta$. Since $\Gamma$ and $\Delta$ have only finitely many gaps, there exist $\gamma_1, \gamma_2 \in \Gamma$ such that 
\[ \gamma_1 + x_1 = \gamma_2 + x_2.\] This implies
\[ \gamma_1 + f(x_1) = f(\gamma_1 + x_1) = f(\gamma_2 + x_2) = \gamma_2 + f(x_2), \]
and subtraction of these equations yields $f(x_1) - x_1 = f(x_2) - x_2$. 
\end{proof}

Since any homomorphism of $\Gamma$-semimodules is injective, such a map is an isomorphism of semimodules if and only if it is surjective.
\medskip

By the previous lemma, we may identify the set of $\Gamma$-homomorphisms
with a subset of $\ZZ$ by mapping a $\Gamma$-homomorphism $f$ to its $c$ with $f(x) = x+c$, namely
\[ \Hom_{\Gamma}(\Delta,\Delta') \cong \left\{ c \in \ZZ ~ \mid ~ c +  \Delta \subseteq \Delta' \right\}. \]

\begin{proposition}
The set  $\Hom_{\Gamma}(\Delta,\Delta')$ is a  $\Gamma$-semimodule.
\end{proposition}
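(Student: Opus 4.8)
The plan is to work entirely with the concrete description of $\Hom_{\Gamma}(\Delta,\Delta')$ furnished by the preceding identification, namely the subset
\[ H := \left\{ c \in \ZZ ~\mid~ c + \Delta \subseteq \Delta' \right\} \]
of $\ZZ$, and to verify directly the three defining properties of a $\Gamma$-semimodule: closure under addition of elements of $\Gamma$, boundedness below, and non-emptiness.

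I would dispose of the closure property first, as it is the most transparent. Given $c \in H$ and $\gamma \in \Gamma$, one computes
\[ (c+\gamma)+\Delta = \gamma + (c+\Delta) \subseteq \gamma + \Delta' \subseteq \Delta', \]
where the final inclusion is exactly the defining condition $\Delta' + \Gamma \subseteq \Delta'$. Hence $c+\gamma \in H$, so that $H + \Gamma \subseteq H$. For boundedness below I would use that $\Delta$ and $\Delta'$, being non-empty and bounded below, admit minimal elements $\min\Delta$ and $\min\Delta'$. If $c \in H$, then in particular $c + \min\Delta \in \Delta'$, whence $c + \min\Delta \geq \min\Delta'$, i.e. $c \geq \min\Delta' - \min\Delta$; thus $\min\Delta' - \min\Delta$ is a lower bound for $H$.

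The hard part is non-emptiness, since here one must actually exhibit a $\Gamma$-homomorphism rather than merely test candidates. The crucial input is that a numerical semigroup $\Gamma$ is cofinite in $\NN$: writing $F$ for the largest gap of $\Gamma$ (which exists because $L = \NN \setminus \Gamma$ is finite), the inclusion $\min\Delta' + \Gamma \subseteq \Delta'$ forces $\Delta'$ to contain every integer $\geq \min\Delta' + F + 1$. Since $\Delta$ is bounded below by $\min\Delta$, choosing any integer $c$ with $c \geq \min\Delta' + F + 1 - \min\Delta$ yields $c + \Delta \subseteq \ZZ_{\geq \min\Delta' + F + 1} \subseteq \Delta'$, so $c \in H$ and $H \neq \emptyset$.

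Combining the three verifications shows that $H$ is a non-empty, lower-bounded subset of $\ZZ$ stable under $\Gamma$, hence a $\Gamma$-semimodule, which completes the argument. I expect the only genuinely delicate point to be the non-emptiness step, which is precisely where the finiteness of the gap set of $\Gamma$ must be invoked; the remaining two properties are routine once $H$ is written in the displayed form.
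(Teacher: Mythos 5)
Your proof is correct, and it is in fact more complete than the paper's own. The paper verifies only the closure axiom $\Hom_{\Gamma}(\Delta,\Delta') + \Gamma \subseteq \Hom_{\Gamma}(\Delta,\Delta')$, by exactly the computation you give ($\gamma + (x+c) = x + (\gamma+c) \in \Delta'$), and silently takes non-emptiness and boundedness below for granted, even though both are part of the definition of a $\Gamma$-semimodule stated in the introduction. You check all three. Your lower bound $\min\Delta' - \min\Delta$ is correct, and your non-emptiness argument --- $\Delta'$ contains every integer exceeding $\min\Delta'$ plus the largest gap of $\Gamma$, so any sufficiently large shift $c$ sends $c+\Delta$ into $\Delta'$ --- is the same cofiniteness principle the paper invokes inside the proof of Lemma 2.1 (``Since $\Gamma$ and $\Delta$ have only finitely many gaps\dots''), here deployed where it is genuinely needed. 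What your more thorough route buys is rigor: without the non-emptiness step the proposition does not formally follow, since the empty set satisfies the closure condition vacuously yet is not a $\Gamma$-semimodule; what the paper's terser proof buys is brevity, at the cost of leaving the two easy axioms to the reader. You correctly identified non-emptiness as the only step requiring a substantive input, namely the finiteness of the gap set.
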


\begin{proof} Let $c \in \Hom_{\Gamma}(\Delta,\Delta')$, then $x+c \in \Delta'$ for any $x \in \Delta$. Since $\Delta'$ is a $\Gamma$-semimodule, we also  have 
$\gamma + (x+c) = x + (\gamma + c) \in \Delta'$ for arbitrary $\gamma \in \Gamma$, but this implies $\gamma + c \in  \Hom_{\Gamma}(\Delta,\Delta')$.
\end{proof}

Shifting of $\Delta$ or $\Delta'$ also shifts $ \Hom_{\Gamma}(\Delta,\Delta')$, with the usual contra- and covariant behaviour of Hom:

\begin{proposition} \label{prop2.3}
For any $d, d' \in \ZZ$ we have
\[
\Hom_{\Gamma}(\Delta + d,\Delta'+d') = \Hom_{\Gamma}(\Delta,\Delta') -d +d'.
\]
\end{proposition}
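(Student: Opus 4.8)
The plan is to work entirely through the identification
$\Hom_{\Gamma}(\Delta,\Delta') \cong \left\{ c \in \ZZ \mid c + \Delta \subseteq \Delta' \right\}$
established immediately after the previous lemma. Under this identification both sides of the asserted equation become subsets of $\ZZ$, so the claim reduces to an equality of such subsets, and it suffices to fix an integer $c$ and show that it lies in the left-hand set if and only if it lies in the right-hand one.

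First I would unwind the membership condition for the left-hand side. By definition, $c \in \Hom_{\Gamma}(\Delta + d,\Delta' + d')$ means $c + (\Delta + d) \subseteq \Delta' + d'$, that is, $c + x + d \in \Delta' + d'$ for every $x \in \Delta$. Subtracting $d'$, this is equivalent to $(c + d - d') + x \in \Delta'$ for all $x \in \Delta$, i.e. to $(c + d - d') + \Delta \subseteq \Delta'$.

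The next step is to recognise this last condition as membership in $\Hom_{\Gamma}(\Delta,\Delta')$: it says precisely that $c + d - d' \in \Hom_{\Gamma}(\Delta,\Delta')$, equivalently $c \in \Hom_{\Gamma}(\Delta,\Delta') - d + d'$. Since every implication in this chain is in fact an equivalence, reading it in both directions yields the set-theoretic equality, and hence the claimed equality of $\Gamma$-semimodules.

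There is no genuine obstacle here; the argument is a direct translation of the shifts across the identification. The only point that demands a little care is the bookkeeping of signs: the shift $d$ of the source enters with a minus sign (the \emph{contravariant} direction) while the shift $d'$ of the target enters with a plus sign (the \emph{covariant} direction), which is exactly the variance behaviour of $\Hom$ referred to in the statement.
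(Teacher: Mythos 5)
Your proof is correct and follows essentially the same route as the paper: both arguments fix an integer $c$, unwind the membership condition $c + (\Delta + d) \subseteq \Delta' + d'$ through the identification of $\Hom_{\Gamma}$ with a subset of $\ZZ$, and read the resulting chain of equivalences in both directions to conclude $c + d - d' \in \Hom_{\Gamma}(\Delta,\Delta')$. Your closing remark on the sign bookkeeping matches the variance behaviour the paper itself highlights just before the proposition.
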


\begin{proof}
This follows from
\begin{eqnarray*}
& & c \in \Hom_{\Gamma}(\Delta + d,\Delta'+d') \\
& \iff & \forall x \in \Delta:~ (x+d) +c-d' \in \Delta' \\
& \iff & \forall x \in \Delta:~ x+(c +d-d') \in \Delta' \\
& \iff& c+d-d' \in \Hom_{\Gamma}(\Delta ,\Delta') \\
& \iff& c \in \Hom_{\Gamma}(\Delta ,\Delta') -(d-d').
\end{eqnarray*}
\end{proof}

\section{Duality} \label{section3}

The most important $\Gamma$-homomorphisms are those into the semimodule $\Gamma$. For any $\Gamma$-semimodule $\Delta$ we set
\[ \Delta^* := \Hom_{\Gamma}(\Delta,\Gamma) \cong \left\{ c \in \ZZ ~ \mid ~ c + \Delta  \subseteq \Gamma\right\}, \]
which we call the \emph{dual} of $\Delta$.
\begin{proposition} \label{3P1}
Let  $\Delta, \Delta_1, \Delta_2 $ be  $\Gamma$-semimodules, and let $d \in \ZZ$. Then
\begin{itemize}
\item[(a)] $\left(\Delta + d \right)^* = \Delta^* - d$.
\item[(b)] $\left(\Delta_1 \cup \Delta_2 \right)^* = \Delta_1^* \cap \Delta_2^*$.
\item[(c)] $\Gamma^* = \Gamma$.
\end{itemize}
\end{proposition}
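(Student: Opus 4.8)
The plan is to work throughout with the identification established just before the statement, namely $\Delta^* = \left\{ c \in \ZZ \mid c + \Delta \subseteq \Gamma \right\}$, and to treat each of the three parts as a short computation with this set description. None of the parts requires new machinery; the work is simply to route each claim through either Proposition \ref{prop2.3} or elementary manipulations of the containment condition.

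For part (a), I would not argue from scratch but instead specialise the already-proven Proposition \ref{prop2.3}. Taking $\Delta' = \Gamma$ and $d' = 0$ there gives
\[
\left(\Delta + d\right)^* = \Hom_{\Gamma}(\Delta + d, \Gamma) = \Hom_{\Gamma}(\Delta, \Gamma) - d = \Delta^* - d,
\]
which is exactly the assertion. This is the one place where invoking a prior result does all the work, so the step is essentially bookkeeping.

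For part (b), I would unwind the defining condition and use that translation distributes over unions. For $c \in \ZZ$ one has $c \in \left(\Delta_1 \cup \Delta_2\right)^*$ iff $c + \left(\Delta_1 \cup \Delta_2\right) \subseteq \Gamma$, iff $\left(c + \Delta_1\right) \cup \left(c + \Delta_2\right) \subseteq \Gamma$, iff both $c + \Delta_1 \subseteq \Gamma$ and $c + \Delta_2 \subseteq \Gamma$, iff $c \in \Delta_1^* \cap \Delta_2^*$. For part (c), I would use that $\Gamma$ is a submonoid of $\NN$, so in particular $0 \in \Gamma$ and $\Gamma + \Gamma \subseteq \Gamma$. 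The inclusion $\Gamma \subseteq \Gamma^*$ follows from closure under addition, since $c \in \Gamma$ forces $c + \Gamma \subseteq \Gamma$; the reverse inclusion follows from $0 \in \Gamma$, since $c \in \Gamma^*$ gives $c = c + 0 \in \Gamma$.

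I do not anticipate any genuine obstacle here: the content is entirely contained in the set-theoretic reformulation of the dual together with the monoid axioms for $\Gamma$ and the translation identity already recorded in Proposition \ref{prop2.3}. The only point deserving a moment of care is recognising that part (a) is a direct instance of the covariant–contravariant shifting formula, rather than something to be re-derived.
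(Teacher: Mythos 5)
Your proposal is correct and matches the paper's own proof in all three parts: (a) is obtained there too as an immediate specialisation of Proposition~\ref{prop2.3}, (b) by the same chain of equivalences unwinding the condition $c+(\Delta_1\cup\Delta_2)\subseteq\Gamma$, and (c) by exactly the same two inclusions using closure under addition and $0\in\Gamma$. No substantive differences to report.
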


\begin{proof}
Part (a) follows immediately from Proposition \ref{prop2.3}, and (b) holds since
\begin{eqnarray*}
& & c \in \left(\Delta_1 \cup \Delta_2 \right)^* \\
& \iff & \forall x \in  \Delta_1 \cup \Delta_2:~ x+c \in \Gamma \\
& \iff & ( \forall x \in  \Delta_1 :~ x+c \in \Gamma) \wedge ( \forall x \in   \Delta_2:~ x+c \in \Gamma) \\
& \iff & x \in \Delta_1^* \wedge x \in \Delta_2^* \\
& \iff & x \in \Delta_1^* \cap  \Delta_2^*.
\end{eqnarray*}
For (c)  we note that the inclusion $\Gamma \subseteq \Gamma^*$ is clear since $\Gamma$ is closed under addition. Conversely let $x \in \Gamma^*$,
then $x + \gamma \in \Gamma$ for all $\gamma \in \Gamma$, so in particular $x = x+0 \in \Gamma$, which implies the reverse inclusion.
\end{proof}

\begin{corollary}
Let $I$ be a $\Gamma$-lean set and $\Delta_I = \bigcup_{i \in I} \left( \Gamma + i \right)$, then $\Delta_I^* =  \bigcap_{i \in I} \left( \Gamma - i \right)$.
\end{corollary}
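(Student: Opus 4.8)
The plan is to deduce this directly from Proposition \ref{3P1}, since $\Delta_I$ is expressed as a finite union of shifted copies of $\Gamma$. The first observation I would make is that the index set $I$ is finite: since $I$ is $\Gamma$-lean, its elements are $0$ together with gaps of $\Gamma$, and there are only finitely many gaps. This finiteness is what allows the two-term duality formula to propagate to the whole union.

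The core step is to upgrade Proposition \ref{3P1}(b) from two semimodules to an arbitrary finite union. I would argue by induction on $|I|$: the base case is the two-term statement already proved, and in the inductive step one writes $\bigcup_{i \in I}(\Gamma + i) = \Big(\bigcup_{i \in I'}(\Gamma + i)\Big) \cup (\Gamma + i_0)$ for some chosen $i_0 \in I$ with $I' = I \setminus \{i_0\}$, applies part (b) once, and then invokes the induction hypothesis on $I'$. This yields
\[
\Delta_I^* = \Big(\bigcup_{i \in I}(\Gamma + i)\Big)^* = \bigcap_{i \in I}(\Gamma + i)^*.
\]

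It then remains to identify each factor $(\Gamma + i)^*$. By Proposition \ref{3P1}(a) we have $(\Gamma + i)^* = \Gamma^* - i$, and by Proposition \ref{3P1}(c) we have $\Gamma^* = \Gamma$, so $(\Gamma + i)^* = \Gamma - i$. Substituting this into the intersection gives $\Delta_I^* = \bigcap_{i \in I}(\Gamma - i)$, as claimed.

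I do not expect a genuine obstacle here: the result is a formal consequence of the three parts of Proposition \ref{3P1}, and the only point requiring any care is the inductive extension of the union-to-intersection formula, which in turn relies on the finiteness of the $\Gamma$-lean set $I$. Everything else is a direct substitution.
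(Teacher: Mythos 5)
Your proof is correct and matches the paper's intended argument: the corollary is stated there without proof precisely because it follows formally from Proposition \ref{3P1}, via iterating part (b) over the finite set $I$ and then applying parts (a) and (c) to get $(\Gamma+i)^*=\Gamma^*-i=\Gamma-i$. Your remarks on the finiteness of $I$ (needed for the induction) and on where each part of the proposition enters are exactly the right points of care.
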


From now on we only consider numerical semigroups $\Gamma=\langle \alpha , \beta \rangle$. The main result of this section describes the minimal system of generators of a dual $\Delta^*$ in terms of  the minimal system of generators of $\Delta$.

\begin{theorem} \label{3T1}
Let $I=\{0,i_1, \ldots ,i_n\}$ be a $\Gamma$-lean set with gaps $i_k = \alpha \beta - a_k \alpha - b_k \beta$ which are ordered increasingly with respect to $<_L$,
and let $\Delta_I =   \bigcup_{i \in I} \left( \Gamma + i \right)$, then
\begin{equation}\label{eq:3T1}
\Delta_I^* = \left(\Gamma + a_1 \alpha  \right) \cup \bigcup_{k=1}^{n-1} \left(\Gamma + a_{k+1} \alpha +b_k \beta \right) \cup \left(\Gamma + b_n \beta \right). 
\end{equation}
\end{theorem}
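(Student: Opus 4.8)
The plan is to reduce the statement to a single membership test and then resolve that test with a normal form for elements of $\Gamma$. By the Corollary preceding the theorem, $\Delta_I^* = \bigcap_{i \in I}(\Gamma - i)$, so $c \in \Delta_I^*$ if and only if $c \in \Gamma$ (the condition contributed by $0 \in I$) together with $c + i_k \in \Gamma$ for every $k = 1,\ldots,n$. The tool I would set up first is the observation that every $z \in \ZZ$ has a unique representation $z = x\alpha + y\beta$ with $0 \le x \le \beta - 1$, and that for such a representation one has $z \in \Gamma$ exactly when $y \ge 0$. I would also record the bounds $1 \le a_k \le \beta - 1$ and $1 \le b_k \le \alpha - 1$: the lower bounds are Rosales' condition (each $i_k$ being a gap), and the upper bounds hold because $a_k \ge \beta$ or $b_k \ge \alpha$ would force $i_k < 0$. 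Note that the $<_L$-ordering of the $i_k$ means precisely $a_1 > \cdots > a_n$ and $b_1 < \cdots < b_n$.

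With this normal form in hand, the heart of the argument is to translate each condition $c + i_k \in \Gamma$ into an inequality. Writing $c = x\alpha + y\beta$ in normal form and substituting $i_k = \alpha\beta - a_k\alpha - b_k\beta$, I would bring $c + i_k$ back into normal form by a single case distinction on the sign of $x - a_k$. When $x \ge a_k$ the $\beta$-coefficient comes out as $y - b_k + \alpha$, which is nonnegative since $b_k < \alpha \le \alpha + y$; so the condition is automatically satisfied. When $x < a_k$ the reduction instead produces $\beta$-coefficient $y - b_k$, so the condition becomes $y \ge b_k$. Because the $a_k$ decrease and the $b_k$ increase, the set $\{k : a_k > x\}$ is an initial segment $\{1,\ldots,m\}$ and the binding constraint among the inequalities $y \ge b_k$ is $y \ge b_m$. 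This yields the clean characterization
\[ c = x\alpha + y\beta \in \Delta_I^* \iff y \ge b_{m(x)}, \qquad m(x) := \#\{\, k : a_k > x \,\}, \ b_0 := 0. \]

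Finally I would match these regions to the asserted generators $g_k := a_{k+1}\alpha + b_k\beta$, using the conventions $a_{n+1} := 0$ and $b_0 := 0$ so that $g_0 = a_1\alpha$ and $g_n = b_n\beta$. Reading $a_0 := \beta$, the region $m(x) = k$ is exactly $a_{k+1} \le x < a_k$, and there $c - g_k = (x - a_{k+1})\alpha + (y - b_k)\beta$ has both coefficients nonnegative, so $c \in \Gamma + g_k$. The same computation run in reverse shows each $g_k$ itself satisfies the membership test, so $\Gamma + g_k \subseteq \Delta_I^*$. These two observations deliver both inclusions simultaneously and establish (\ref{eq:3T1}).

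I expect the main obstacle to be the bookkeeping in the normal-form reduction: getting the wrap-around right in the case $x < a_k$, and then correctly reading off from the interlacing of the $a_k$ and $b_k$ which generator $g_k$ dominates each region. A sign slip in the reduction, or an off-by-one in the staircase indices $g_0,\ldots,g_n$, is the most likely place to err; it is the $<_L$-ordering that guarantees the relevant constraint is always the single inequality $y \ge b_{m(x)}$, so I would invoke it explicitly at that point.
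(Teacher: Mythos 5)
Your proposal is correct and takes essentially the same route as the paper's proof: both reduce $\Delta_I^*$ to $\bigcap_{i\in I}(\Gamma-i)$ via the corollary, translate each condition $c+i_k\in\Gamma$ into coefficient inequalities through Rosales' characterisation, and perform the same staircase case analysis $a_{k+1}\le x<a_k$ governed by the $<_L$-ordering (the monotonicity $a_1>\cdots>a_n$, $b_1<\cdots<b_n$), settling the reverse inclusion by checking that each generator $a_{k+1}\alpha+b_k\beta$ passes the membership test. Your unique normal form $c=x\alpha+y\beta$ with $0\le x\le\beta-1$ is a slightly tidier packaging than the paper's arbitrary representation $r\alpha+s\beta$ with $r,s\ge 0$, but it does not change the argument in substance.
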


\begin{proof}

By the previous proposition we have $\Delta_I^* = \Gamma \cap \bigcap_{k=1}^n \left(\Gamma - i_k \right)$, hence an $x = r\alpha + s \beta \in \Gamma$ is contained in $\Delta_I^*$ if and only if
\[y_k := x+ i_k = \alpha \beta - (a_k-r) \alpha - (b_k -s) \beta \in \Gamma \]
holds for $k = 1, \ldots , n$. By Rosales' characterisation of gaps (see \cite{rosales}) this means
\begin{equation} \label{2eq1}
 a_k -r \leq 0 ~ \vee ~ b_k - s \leq 0 ~~\mbox{for}~~ k=1, \ldots ,n.
\end{equation}
First we show $\Delta_I^*$ is contained in the $\Gamma$-semimodule on the right-hand side of (\ref{eq:3T1}), which we denote by  $\widetilde{\Delta_I}$: Let $x = r \alpha + s \beta \in \Delta_I^*$, then (\ref{2eq1}) holds. Since, by the ordering of the gaps $i \in I$, $r \geq a_k$ implies $r > a_{\ell}$ for all $\ell > k$ and $s \geq b_k$ implies $s > b_{\ell}$ for all $\ell < k$, we distinguish three cases:
\begin{itemize}
\item[i)] If $r \geq a_1$, then $x = a_1 \alpha + (r-a_1) \alpha + s \beta \in \Gamma + a_1 \alpha$.
\item[ii)] If  $a_k > r \geq a_{k+1}$  for some $k \in \{1, \ldots , n \}$ then condition (\ref{2eq1}) implies $s \geq b_k$, and hence 
$x = a_{k+1} \alpha + b_k \beta + \big( (r-a_{k+1}) \alpha + (s-b_k) \beta \big ) \in \Gamma + a_{k+1} \alpha + b_k \beta$.
\item[iii)] Otherwise $a_n >r$, hence condition (\ref{2eq1}) implies $s \geq b_n$,  and so $x = b_n \beta +r \alpha + (s-b_n) \beta \in \Gamma + b_n \beta$.
\end{itemize}

Conversely we have to show that all generators of $\widetilde{\Delta_I}$ are contained in $\Gamma - i_{\ell}$ for $\ell = 1, \ldots ,n$. This is easily seen for
$a_1 \alpha$ and $b_n \beta$ since
\begin{eqnarray*}
a_1 \alpha + i_{\ell} &=& \alpha \beta - \underbrace{  (a_{\ell}-a_1)}_{ \leq 0}\alpha - b_{\ell} \beta \in \Gamma \\
b_n \beta + i_{\ell} &=& \alpha \beta -a_{\ell} \alpha -\underbrace{  (b_{\ell}-a_n)}_{ \leq 0}  \beta \in \Gamma,
\end{eqnarray*}
and for the other generators 
\[ a_{k+1} \alpha + b_k \beta + i_{\ell} = \alpha \beta - (a_{\ell} - a_{k+1}) \alpha - (b_{\ell} - b_k) \beta \]
is also an element of $\Gamma$, because  $b_{\ell} - b_k \leq 0$ for $k \geq \ell$ and $a_{\ell} - a_{k+1} \leq 0$ for $k< \ell$. 
\end{proof}

As a first application of Theorem \ref{3T1} we show that the dual of a dual $\Gamma$-semimodule is the original semimodule:

\begin{proposition} \label{3P2}
Let $I=\{0,i_1, \ldots ,i_n\}$ be as in Theorem \ref{3T1}. Then
\[
(\Delta_I^*)^*=\Delta_I.
\]
\end{proposition}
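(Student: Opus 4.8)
The plan is to apply Theorem \ref{3T1} twice. First I write $\Delta_I^*$ in the union form provided by Theorem \ref{3T1}, abbreviating its generators by
\[
g_0 := a_1\alpha, \qquad g_k := a_{k+1}\alpha + b_k\beta \ (1\le k\le n-1), \qquad g_n := b_n\beta,
\]
so that $\Delta_I^* = \bigcup_{k=0}^n (\Gamma + g_k)$. Dualising this union is then a formal matter: iterating Proposition \ref{3P1}(b) turns the dual of a union into the intersection of the duals, while Proposition \ref{3P1}(a) together with (c) gives $(\Gamma + g_k)^* = \Gamma^* - g_k = \Gamma - g_k$. Hence
\[
(\Delta_I^*)^* = \bigcap_{k=0}^n (\Gamma - g_k) = \{\, x \in \ZZ \mid x + g_k \in \Gamma \text{ for } k = 0,\ldots,n \,\},
\]
and the task reduces to showing that this set equals $\Delta_I = \bigcup_{\ell=0}^n (\Gamma + i_\ell)$, where $i_0 = 0$.

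The inclusion $\Delta_I \subseteq (\Delta_I^*)^*$ comes for free: for any $\Gamma$-semimodule $\Delta$ one has $\Delta \subseteq \Delta^{**}$, since every $x \in \Delta$ and every $c \in \Delta^*$ satisfy $x + c \in \Gamma$ by the very definition of $\Delta^*$. Concretely, the memberships $i_\ell + g_k \in \Gamma$ that this amounts to are exactly those already verified in the converse part of the proof of Theorem \ref{3T1}, so this direction needs no new computation.

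The real content is the reverse inclusion $(\Delta_I^*)^* \subseteq \Delta_I$, which I expect to be the main obstacle. Here I would take $x$ with $x + g_k \in \Gamma$ for all $k$ and show $x \in \Delta_I$ by a Rosales-type case analysis dual to the forward part of the proof of Theorem \ref{3T1}: arguing by contraposition, if $x \notin \Delta_I$ I must exhibit an index $k$ with $x + g_k \notin \Gamma$, splitting into cases according to the size of the coefficients of $x$ relative to the $a_k$ and $b_k$ (mirroring cases i)--iii) there). The delicate points are that $x$ is not assumed to lie in $\Gamma$ a priori, so one cannot directly invoke the clean criterion used for elements of $\Gamma$; and that a naive reapplication of Theorem \ref{3T1} to $\Delta_I^*$ would first require shifting $\Delta_I^*$ down to minimum $0$, whose value $\min\Delta_I^* = \min_k g_k$ is genuinely case-dependent and need not be attained at an endpoint $g_0$ or $g_n$. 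For this reason I would run the Rosales computation for the intersection $\bigcap_{k=0}^n(\Gamma - g_k)$ directly rather than normalising.
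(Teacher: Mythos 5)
Your formal reduction of $(\Delta_I^*)^*$ to $\bigcap_{k=0}^n(\Gamma-g_k)$ via Proposition \ref{3P1} is correct, and the inclusion $\Delta_I\subseteq(\Delta_I^*)^*$ is settled exactly as you say (it is the general fact $\Delta\subseteq\Delta^{**}$, with the needed memberships $i_\ell+g_k\in\Gamma$ already checked in the proof of Theorem \ref{3T1}). But the reverse inclusion $(\Delta_I^*)^*\subseteq\Delta_I$ --- which you yourself call the real content --- is never proved: you only announce a contrapositive, Rosales-type case analysis that you ``would'' run. No such analysis is carried out, and it is not routine: as you note, an element $x$ of the intersection is not known a priori to lie in $\Gamma$ (none of the $g_k$ is $0$, so no single condition $x+g_k\in\Gamma$ forces $x\in\Gamma$, unlike in the proof of Theorem \ref{3T1} where $0\in I$ supplies the constraint $x\in\Gamma-i_0=\Gamma$), so one cannot even fix a representation $x=r\alpha+s\beta$ with $r,s\geq 0$ to start the case distinction. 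As it stands the proposal is a strategy with its hardest step missing; this is a genuine gap.

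Moreover, the reason you give for avoiding the shift-and-reapply route is a red herring: the paper proves the proposition precisely that way, and the shift it uses is by $g_0=a_1\alpha$, \emph{not} by $\min\Delta_I^*$. After this shift, $0$ is a generator and the remaining generators take the Rosales form $\hat{\imath}_k=\alpha\beta-(a_1-a_{n-k+2})\alpha-(\alpha-b_{n-k+1})\beta$, with coefficients satisfying $0<a_1-a_{n-k+2}<\beta$ and $0<\alpha-b_{n-k+1}<\alpha$ and ordered increasingly with respect to $<_L$; a second application of Theorem \ref{3T1} then returns $a_1\alpha+\Delta_I$, and Proposition \ref{3P1}(a) finishes the proof. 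You are right on the numerics: $\min_k g_k$ need not be $g_0$ or $g_n$ (for $\Gamma=\langle 5,7\rangle$ and $I=\{0,8,6,9\}$ the dual generators are $20,22,19,21$), and as a consequence some $\hat{\imath}_k$ can even be negative (here $\hat{\imath}_2=-1$), so $\hat{I}$ is then not literally a $\Gamma$-lean subset of $\NN$. But this observation does not block the route: the proof of Theorem \ref{3T1} uses only the coefficient inequalities and the $<_L$-ordering, never the sign of the $i_k$ or that the shift attains the minimum, so it applies verbatim in this mildly extended setting (a point the paper itself glosses over). Completing your direct computation of $\bigcap_{k=0}^n(\Gamma-g_k)$ would in effect reprove the hard direction of Theorem \ref{3T1} from scratch, which is exactly the duplication the paper's argument is designed to avoid.
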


\begin{proof}
Let $\Delta_I=\Gamma \cup \bigcup_{k=1}^{n} (\Gamma + i_k)$. It follows from Theorem \ref{3T1} that
\[
\Delta_I^* =(\Gamma + a_1\alpha) \cup \bigcup_{k=1}^{n-1} \left(\Gamma + a_{k+1} \alpha +b_k \beta \right) \cup \left(\Gamma + b_n \beta \right);
\]
in other words
\begin{eqnarray*}
& &\Delta_I^*-a_1\alpha \\
&=&\Gamma \cup \bigcup_{k=1}^{n-1} \left(\Gamma + (a_{k+1}-a_1) \alpha +b_k \beta \right) \cup \left(\Gamma + b_n \beta -a_1\alpha \right)\\
&=& \Gamma \cup \bigcup_{k=1}^{n-1} \left(\Gamma + \alpha \beta -(a_1-a_{k+1}) \alpha -(\alpha-b_k) \beta  \right) \cup \left(\Gamma + \alpha \beta -a_1\alpha-(\alpha -b_n) \beta ) \right).
\end{eqnarray*}
In order to have the generators of $\Delta_I^*-a_1\alpha$ ordered increasingly with respect to $<_L$ we set
\begin{equation}\label{eq:ik}
\hat{\imath}_k := \alpha \beta - (a_1-a_{n-k+2}) \alpha - (\alpha -b_{n-k+1}) \beta, 
\end{equation}
therefore
\[
\Delta_I^*-a_1\alpha=\Gamma \cup \bigcup_{k=1}^{n} (\Gamma + \hat{\imath}_k).
\]
By Proposition \ref{3P1}(a) one has $(\Delta_I^{*})^{*}+a_1\alpha=(\Delta_I^{*}-a_1\alpha)^*$; on the other hand Theorem \ref{3T1} implies
\begin{eqnarray*}
& & \Gamma \cup \bigcup_{k=1}^{n} (\Gamma + \hat{\imath}_k) \\
&=& (\Gamma + a_1\alpha) \cup \bigcup_{k=1}^{n-1} \left(\Gamma + (a_1-a_{n-k+1}) \alpha +(\alpha-b_{n-k+1}) \beta \right) \cup \left(\Gamma + (\alpha - b_1) \beta \right) \\
&=& a_1\alpha + \left (\Gamma \cup \bigcup_{k=2}^{n-1}(\Gamma+ \alpha \beta -a_{n-k+1}\alpha - b_{n-k+1}\beta) \cup (\Gamma+ \alpha \beta -a_1\alpha-b_1\beta)\right )\\
&=& a_1\alpha+\Delta_I,
\end{eqnarray*}
and therefore $(\Delta_I^*)^*=\Delta_I$, as desired.
\end{proof}

\section{Syzygies of $\FF[\Gamma]$-modules and of $\Gamma$-semimodules} \label{section4}

The numerical semigroup $\Gamma = \langle \alpha, \beta \rangle$ and the $\Gamma$-semimodules correspond to objects of commutative algebra:
Let $\FF$ be an arbitrary field, then we can consider the semigroup ring $\FF[\Gamma]$, which may be identified with the subalgebra $R:=  \FF[t^{\alpha},t^{\beta}]$ of the polynomial ring; the counterparts of the $\Gamma$-semimodules in this setting are the graded $R$-submodules of $\FF[t]$. In this section we investigate minimal graded free resolutions of such modules and their syzygies.
\medskip

Let $I$ be a $\Gamma$-lean set as in the previous theorem, and let $M_I = \sum_{i \in I} Rt^i$. We consider the first syzygy of $M_I$, the kernel of the map
\begin{eqnarray*}
\bigoplus_{i \in I} R(-i) & \stackrel{\pfi_1}{\longrightarrow} & M_I \\
(f_0, \ldots , f_n) & \longmapsto & \sum_{k =0}^n f_k t^{i_k}.
\end{eqnarray*}
By a result of Piontkowski (see \cite{pio}) this kernel is generated by \emph{bivectors}, i.e.~elements of the form $(0, \ldots, 0, t^{\gamma_k},0, \ldots, 0, t^{\gamma_m},0, \ldots,0)$ with $i_k + \gamma_k = i_m + \gamma_m$. By Corollary 4.3 of our article \cite{mu2}, in fact $n+1$ special bivectors are sufficient, namely
\begin{eqnarray*}
f_0 &=& (t^{(\beta-a_1)\alpha},-t^{b_1 \beta},0, \ldots ,0) \\
f_k &=& (0, \ldots, 0, t^{(a_k-a_{k+1})\alpha},-t^{(b_{k+1}-b_k) \beta},0, \ldots ,0)~~~\mbox{for}~k=1, \ldots, n \\
f_n &=& (-t^{(\alpha -b_n) \beta},0, \ldots, 0,t^{a_n \alpha}).
\end{eqnarray*}
 
The degrees $\deg f_k = j_k$ are exactly the elements of the set $J$, the second component of the \emph{fundamental couple} $[I,J]$, see \cite{mu}. Hence, as already mentioned in the introduction, the support of the syzygy $\ker \pfi_1$ agrees with the $\Gamma$-semimodule $\Delta_J$, the object we called the syzygy of $\Delta_I$ in our article \cite{mu2}.

The second step of the free resolution of $M_I$ is the map
\begin{eqnarray*}
\bigoplus_{j \in J} R(-j) & \stackrel{\pfi_2}{\longrightarrow} & \ker \pfi_1 \\
(g_0, \ldots , g_n) & \longmapsto & \sum_{k=0}^n g_kf_k.
\end{eqnarray*}
The condition $\pfi_2(g_0, \ldots, g_n) = 0$ yields the following system of equations:
\[
\begin{array}{lclcl}
g_0 t^{(\beta -a_1) \alpha} & -& g_n t^{(\alpha - b_n) \beta} &=& 0 \\
g_1 t^{(a_1 -a_2) \alpha}& -& g_0 t^{b_1 \beta} &=& 0 \\
g_k t^{(a_k - a_{k+1}) \alpha} & -& g_{k-1} t^{(b_k - b_{k-1}) \beta} &=& 0 \, \, \,\, \, \, \mbox{for}~ k=2, \ldots , n-1 \\
g_n t^{a_n \alpha}& -& g_{n-1} t^{(b_n-b_{n-1}) \beta} &=& 0
 \end{array} \]
We can solve for $g_0$ and get
\begin{eqnarray*}
g_k &=& g_0 t^{b_k \beta - (a_1-a_{k+1}) \alpha}~~~\mbox{for}~ k=1, \ldots,  n-1 \\
g_n &=& g_0  t^{b_n \beta - a_1 \alpha},
\end{eqnarray*}
as one easily checks by induction on $k$. Hence $g=(g_0, \ldots , g_n)$ is an element of $\ker \pfi_2$ if and only if it can be written in the form
\[ g= g_0 \left(1, t^{b_1 \beta - (a_1-a_{2}) \alpha}, \ldots, t^{b_{n-1} \beta - (a_1-a_n) \alpha},  t^{b_n \beta - a_1 \alpha} \right) \]
with some $g_0 \in R$ such that all the entries are in $R$ as well.

In the language of $\Gamma$-semimodules this means that we are looking for the dual of the semimodule
\begin{equation} \label{4eq1}
\widehat{\Delta_I} := \Gamma \cup \bigcup_{k=1}^{n-1} \left( \Gamma + (b_k \beta -(a_1-a_{k+1}) \alpha \right) \cup (\Gamma +  b_n \beta - a_1 \alpha).
\end{equation}
Again this dual can be computed using Theorem \ref{3T1}: As in the proof of Proposition \ref{3P2} we renumber the generators to get them into ascending order with respect to $<_L$, i.~e.~we set
\begin{eqnarray*}
\hat{\imath}_1 &:=& \alpha \beta - a_1 \alpha - (\alpha -b_n) \beta \\
\hat{\imath}_k &:=& \alpha \beta - (a_1-a_{n-k+2}) \alpha - (\alpha -b_{n-k+1}) \beta \ \  \mbox{~for~} \  k=2, \ldots , n.
\end{eqnarray*}
Theorem \ref{3T1} then implies
\begin{eqnarray*}
\widehat{\Delta_I}^* &=& \left( \Gamma + \bigcup_{k=1}^n (\Gamma + \hat{\imath}_k ) \right)^* \\
&=& a_1 \alpha + \Gamma \cup  \left( \bigcup_{k=1}^{n-1} \Gamma + (a_1 - a_{n-k+1}) \alpha + (\alpha - b_{n-k+1}) \beta \right) \cup
(\Gamma + (\alpha -b_1) \beta) \\
&=& a_1 \alpha + \Gamma \cup  \left( \bigcup_{k=1}^{n-1} \Gamma + (a_1 - a_{k+1}) \alpha + (\alpha - b_{k+1}) \beta \right) \cup
(\Gamma + (\alpha -b_1) \beta) \\
&=& a_1 \alpha + \left( \Gamma \cup  \left( \bigcup_{k=1}^{n-1} \Gamma + \alpha \beta -   a_{k+1}  \alpha -  b_{k+1}  \beta \right) \cup
(\Gamma + \alpha \beta - a_1\alpha -b_1 \beta) \right)\\
&=& a_1 \alpha + \left(\Gamma \cup \bigcup_{k=2}^n (\Gamma + i_k) \cup (\Gamma + i_1) \right) \\
&=& a_1 \alpha + \Delta_I,
\end{eqnarray*}
hence
\begin{eqnarray*}
\ker \pfi_2 &=& \left\{ g_0 \left(1, t^{b_1 \beta - (a_1-a_{2}) \alpha}, \ldots, t^{b_{n-1} \beta - (a_1-a_n) \alpha},  t^{b_n \beta - a_1 \alpha} \right)
~ \mid ~ g_0 \in M_I \cdot t^{a_1 \alpha} \right\}, \\
& \cong& M_I.
\end{eqnarray*}

Therefore we have shown the following
\begin{theorem}\label{4T1}
Let $\Gamma=\langle \alpha, \beta \rangle$ be a numerical semigroup. 
Let $I$ be a $\Gamma$-lean set, and let $M_I = \sum_{i \in I} Rt^i$ with $R=  \FF[t^{\alpha},t^{\beta}]$. Then
the minimal graded free resolution of $M_I$ is ---up to a shift--- periodic of period $2$.
\end{theorem}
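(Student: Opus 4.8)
The plan is to show that the minimal graded free resolution of $M_I$ is periodic of period $2$ by explicitly computing the first few syzygy modules and demonstrating that the second syzygy module is isomorphic to a shift of $M_I$ itself. The resolution then repeats. First I would recall that the initial terms of the resolution are already laid out in the discussion preceding the statement: the map $\pfi_1$ presents $M_I$ with free module $\bigoplus_{i\in I}R(-i)$, and by Piontkowski's result together with Corollary 4.3 of \cite{mu2}, the first syzygy $\ker\pfi_1$ is minimally generated by the $n+1$ explicit bivectors $f_0,\ldots,f_n$, with degrees recorded in the set $J$. This gives the next free module $\bigoplus_{j\in J}R(-j)$ in the resolution together with the map $\pfi_2$.

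The heart of the argument is to identify $\ker\pfi_2$. I would solve the homogeneous linear system coming from $\pfi_2(g_0,\ldots,g_n)=0$ directly. Since each equation relates consecutive entries $g_{k-1}$ and $g_k$ via multiplication by monomials $t^{(a_k-a_{k+1})\alpha}$ and $t^{(b_k-b_{k-1})\beta}$, one can express every $g_k$ in terms of $g_0$ by an easy induction on $k$, obtaining $g_k=g_0\,t^{b_k\beta-(a_1-a_{k+1})\alpha}$ for $k=1,\ldots,n-1$ and $g_n=g_0\,t^{b_n\beta-a_1\alpha}$. Thus $\ker\pfi_2$ is the set of multiples $g_0\cdot(1,t^{\ldots},\ldots,t^{\ldots})$ for which all coordinates lie in $R$; the admissible exponents for $g_0$ are precisely a dual semimodule, namely $\widehat{\Delta_I}^*$, where $\widehat{\Delta_I}$ is the semimodule in \eqref{4eq1} generated by the exponent vectors. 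Here is where Theorem \ref{3T1} does the real work: applying it to $\widehat{\Delta_I}$ (after renumbering the generators into $<_L$-increasing order as in the proof of Proposition \ref{3P2}) yields $\widehat{\Delta_I}^*=a_1\alpha+\Delta_I$. Consequently $\ker\pfi_2\cong M_I$ as graded $R$-modules up to the shift by $a_1\alpha$.

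The conclusion is then immediate: since the second syzygy of $M_I$ is isomorphic to a shift of $M_I$, the minimal free resolution reproduces itself after two steps. More precisely, the map $\pfi_3$ presenting $\ker\pfi_2$ will coincide, up to the appropriate grading shift, with $\pfi_1$, the map $\pfi_4$ with $\pfi_2$, and so on; by induction the entire tail of the resolution is governed by the pair $(\pfi_1,\pfi_2)$ repeated with successively shifted gradings. Hence the resolution is periodic of period $2$ up to a shift, as claimed.

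The main obstacle I anticipate is the bookkeeping in the dual computation: the generators of $\widehat{\Delta_I}$ are not automatically ordered with respect to $<_L$, so one must carefully reindex them before Theorem \ref{3T1} can be invoked, and then verify that the resulting generators of $\widehat{\Delta_I}^*$ reassemble—after subtracting the shift $a_1\alpha$—exactly into the original generators $i_1,\ldots,i_n$ of $\Delta_I$. This is the same delicate relabelling that appears in Proposition \ref{3P2}, and getting the indices to match (including the boundary generators corresponding to $a_1\alpha$ and $b_n\beta$) is the step that requires the most care; everything else reduces to the inductive solution of a triangular monomial system and a formal appeal to the self-dual identity $\widehat{\Delta_I}^*=a_1\alpha+\Delta_I$.
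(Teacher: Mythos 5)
Your proposal is correct and follows essentially the same route as the paper: solving the triangular monomial system from $\pfi_2(g_0,\ldots,g_n)=0$ to express all $g_k$ in terms of $g_0$, recognising the admissible $g_0$ as the dual $\widehat{\Delta_I}^*$ of the semimodule in \eqref{4eq1}, and invoking Theorem \ref{3T1} (with the same $<_L$-reindexing as in Proposition \ref{3P2}) to obtain $\widehat{\Delta_I}^*=a_1\alpha+\Delta_I$, whence $\ker\pfi_2\cong M_I$ up to a shift and the resolution is $2$-periodic. The reindexing subtlety you flag is indeed the only delicate point, and the paper handles it exactly as you describe.
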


Of course this result could alternatively be deduced from the graded version of 
a classical result by Eisenbud, see \cite{eis}, Theorem 6.1 (ii). In particular, Theorem \ref{4T1} implies that the relationship between the first syzygy in the commutative algebra sense and the first syzygy in the setting of $\Gamma$-semimodules does not extend to the higher syzygies, since the sequence of semimodule syzygies $\mathrm{Syz}^{(k)}(\Delta_I)$, $k \in \mathbb{N}$, is periodic with period $n+1=|I|$.

\section{Selfdual $\Gamma$-semimodules} \label{section5}

In this section we want to investigate the so-called selfdual $\Gamma$-semimodules, that is, $\Gamma$-semimodules $\Delta$ such that $\Delta^* \cong \Delta$. Our main tool will be the description of the $\Gamma$-semimodule by a two-row matrix as introduced in our paper \cite{mu2}. We briefly recall the basic facts here.
\medskip

Let $\Delta$ be a $\Gamma$-semimodule minimally generated by a $\Gamma$-lean set $I$. The gaps $i_k$ can be mapped to lattice points via $i_k=\alpha\beta -a_k \alpha -b_k \beta \mapsto (a_k,b_k)$ and these images can be viewed as turns from $x$-direction to $y$-direction in a lattice path from $(0,\alpha)$ to $(\beta,0)$.
This yields a bijection between the set of isomorphism classes of $\Gamma$-semimodules and the set of lattice paths from $(0,\alpha)$ to $(\beta, 0)$ staying below the diagonal, cf.~Theorem 3.7 in \cite{mu2}.
\medskip

Any such a lattice path can also be described by a matrix with two rows, where the $i$-th column contains the numbers of steps downwards and to the right the path takes between the $(i-1)$-th and the $i$-th turning point; the entries in the rows of this matrix sum up to $\beta$~resp.~$\alpha$.
\\
\begin{center}
  \begin{tikzpicture}[scale=0.6]
    \draw[] (0,0) grid [step=1cm](7,5);
    \draw[] (0,5) -- (7,0);
    \draw[ultra thick] (0,5) -- (0,3) -- (1,3) -- (1,2) -- (3,2) -- (3,1) -- (4,1) -- (4,0) -- (7,0);
    \draw[fill=white] (0,5) circle [radius=0.1]; 
    \draw[fill] (1,3) circle [radius=0.1]; 
     \draw[fill] (3,2) circle [radius=0.1]; 
     \draw[fill] (4,1) circle [radius=0.1]; 
      \draw[fill=white] (7,0) circle [radius=0.1]; 
      \node at (12,2.5) {$\left (
\begin{array}{cccc}
2&1&1 &1\\
1&2&1 &3
\end{array}
\right )$};
   \end{tikzpicture}
\end{center}
\begin{center}
{\small Lattice path and matrix for the $\langle 5,7 \rangle$-semimodule generated by $\{0,9,6,8\}$.}
\end{center}
\medskip

In general the matrix associated to a $\Gamma$-lean set $I=\{i_0=0, i_1, \ldots , i_n\}$ with $i_k=\alpha\beta -a_k \alpha -b_k \beta$ is 

$$
\left (
\begin{array}{ccccc}
\alpha -b_n&b_n-b_{n-1}&\ldots &b_2-b_1&b_1\\
a_n&a_{n-1}-a_n&\ldots&a_1-a_2&\beta-a_1
\end{array}
\right ).
$$

Conversely for any two-row matrix with positive integer entries and the row sum property there exists exactly one cyclic permutation of its columns such that the permuted matrix describes a lattice path staying below the diagonal. Therefore we even have a bijection between the set of isomorphism classes of  $\Gamma$-semimodules and the set of equivalence classes of two-row matrices modulo cyclic permutation of columns. In the sequel this equivalence will be denoted by $\equiv$.
\medskip

Next we deduce how taking the dual changes the associated matrix. Let $\Delta$ be a semimodule generated by a $\Gamma$-lean set $I$ as above. Then a semimodule isomorphic to $\Delta^*$ is generated by $\hat{I}=\{0,\hat{\imath}_1, \ldots , \hat{\imath}_n\}$ where $\hat{\imath}_1=\alpha \beta -a_1\alpha-(\alpha-b_n)\beta$ and $\hat{\imath}_k=\alpha \beta -(a_1-a_{n-k+2})\alpha-(\alpha-b_{n-k+1})\beta$ for $k=2,\ldots ,n$, as already mentioned in the proof of Proposition \ref{3P2}. Hence the matrix associated to $\hat{I}$ is equivalent to
\begin{eqnarray*}
& & 
\left (
\begin{array}{ccccc}
\alpha -(\alpha-b_1)&(\alpha-b_1)-(\alpha-b_{2})&\ldots &(\alpha-b_{n-1})-(\alpha-b_n)&\alpha-b_n\\
a_1-a_2&(a_{1}-a_3)-(a_1-a_2)&\ldots&a_1-(a_{1}-a_n)&\beta-a_1
\end{array}
\right )\\
&=& 
\left (
\begin{array}{ccccc}
b_1&b_2-b_{1}&\ldots &b_n-b_{n-1}&\alpha -b_n\\
a_1-a_2&a_{2}-a_3&\ldots&a_n&\beta-a_1
\end{array}
\right ).
\end{eqnarray*}

Comparison of the matrices shows that the dual has the order of the columns reversed with an additional shift by one position in the lower row; the effect of taking the dual can therefore be described by
\begin{equation}\label{eq:matrix}
\left (
\begin{array}{ccccc}
y_0&y_1&\ldots &y_{n-1}&y_n\\
x_0&x_1&\ldots &x_{n-1}&x_n
\end{array}
\right )^* \equiv
\left (
\begin{array}{ccccc}
y_n&y_{n-1}&\ldots &y_1&y_0\\
x_{n-1}&x_{n-2}&\ldots &x_{0}&x_n
\end{array}
\right ).
\end{equation}

Hence a $\Gamma$-semimodule is selfdual if and only if its matrix remains unchanged (up to cyclic permutation) under the inversion and shifting described in (\ref{eq:matrix}). This means that there exists $k \in \mathbb{N}$ such that
\begin{equation*}
\left (
\begin{array}{ccccc}
y_k&y_{k+1}&\ldots &y_{k-2}&y_{k-1}\\
x_k&x_{k+1}&\ldots &x_{k-2}&x_{k-1}
\end{array}
\right ) =
\left (
\begin{array}{ccccc}
y_n&y_{n-1}&\ldots &y_1&y_0\\
x_{n-1}&x_{n-2}&\ldots &x_{0}&x_n
\end{array}
\right ),
\end{equation*}
with the matrices being equal entrywise and \emph{not} only up to cyclic permutation. Comparison of the entries shows $y_{k+i}=y_{n-1}$ for $i=0,\ldots, n-k$, $y_i=y_{k-1-i}$ for $i=0,\ldots, k-1$. Therefore the top row of the matrix consists of two palindromic blocks. For further investigation we distinguish three cases: 
\medskip

\noindent Case I: If $n$ is even the matrix consists of an odd number of columns, hence one of the palindromic blocks mentioned above has to contain an even number of entries and the other contains an odd number; therefore the top row can be viewed as a cyclic permutation of a single palindromic block with the central element of the odd-sized block in the middle. We may consider the cyclic permutation of the matrix which has this middle element in the central column:
\[
\left(
\begin{array}{lcr}
\mbox{[}y_0~ \ldots ~y_{\ell-1}] &  y_{\ell}  & [y_{\ell-1} ~  \ldots~y_{0}] \\
~x_0& \ldots & x_n~
\end{array}
\right).
\]

Since the top row remains unchanged under the operation given by (\ref{eq:matrix}) this matrix is associated to a selfdual $\Gamma$-semimodule if and only if the bottom row remains unchanged as well, that is
\[
\left(
\begin{array}{lcr}
\mbox{[}y_0~ \ldots ~y_{\ell-1}] &  y_{\ell}  & [y_{\ell-1} ~  \ldots~y_{0}] \\
~x_0& \ldots & x_n
\end{array}
\right)
=
\left(
\begin{array}{lcr}
\mbox{[}y_0~ \ldots ~y_{\ell-1}] &  y_{\ell}  & [y_{\ell-1} ~  \ldots~y_{0}] \\
~x_{n-1}& \ldots & x_0~x_n
\end{array}
\right).
\]

Comparison of entries now shows that the sequence $x_0, \ldots , x_{n-1}$ has to be palindromic, therefore the matrix looks like
\begin{equation}\label{eq:matrix1}
\left(
\begin{array}{cccccc}
\mbox{[}y_0~ \ldots ~y_{\ell-1}] &y_{\ell}&[y_{\ell -1}& \ldots &y_1& y_0] \\
\mbox{[}x_0~ \ldots ~x_{\ell-1}]  &[ x_{\ell -1}& x_{\ell -2}&\ldots &x_0]& x_n
\end{array}
\right).
\end{equation}

\noindent Case II: If $n$ is odd the matrix consists of an even number of columns, hence the palindromic blocks mentioned above have to be either both even-sized or both odd-sized:
\medskip

\noindent Case II-a: If both blocks are even-sized the top row can be viewed as a cyclic permutation of a single palindromic block without a central element:
\[
\left(
\begin{array}{lcr}
\mbox{[}y_0~ \ldots ~y_{\ell}]   & [y_{\ell} ~  \ldots~y_{0}] \\
 x_0 ~~ \ldots &\ \ \ \  \ldots  ~ ~x_n
\end{array}
\right).
\]

With the same reasoning as in Case I we deduce that the matrix has to be of the form
\begin{equation} \label{eq:matrix2a}
\left(
\begin{array}{ll}
\mbox{[}y_{0} \ldots \ \ \ldots \ \ \ y_{\ell-1}]&[y_{\ell-1} \ldots ~\ \ \ldots~y_{0}]\\
\mbox{[}x_0~ \ldots ~x_{\ell-2}] ~x_{\ell-1}& [x_{\ell-2} ~  \ldots~x_{0}] \ \ x_n
\end{array}
\right).
\end{equation}

\noindent Case II-b: If both blocks are odd-sized then, up to a cyclic permutation, the top row consists of an odd-sized palindromic block and one additional entry
\[
\left(
\begin{array}{clcr}
y_0&[y_1~ \ldots ~y_{\ell-1}] &  y_{\ell}  & [y_{\ell-1} ~  \ldots~y_{1}] \\
x_0&x_1& \ldots & x_{n}
\end{array}
\right).
\]

Up to a shift by one position to the left, the top row remains unchanged under the operation given by (\ref{eq:matrix}):
\begin{eqnarray*}
&&
\left(
\begin{array}{clcr}
y_0&[y_1~ \ldots ~y_{\ell-1}] &  y_{\ell}  & [y_{\ell-1} ~  \ldots~y_{1}] \\
x_0&x_1& \ldots & x_{n}
\end{array}
\right)^*\\
&~&
\!\!\!\!\!\!=
\left(
\begin{array}{lrrl}
\mbox{[}y_1~ \ldots ~y_{\ell-1}]   \ y_{\ell} \   [y_{\ell-1}  &\ldots&y_{1}] &y_0\\
x_{n-1}& \ldots &  x_{0}&x_n
\end{array}
\right)\\
&&
\!\!\!\!\!\! \equiv
\left(
\begin{array}{clcr}
y_0&[y_1~ \ldots ~y_{\ell-1}] & y_{\ell}& [y_{\ell-1} ~  \ldots~y_{1}] \\
x_n&x_{n-1}& \ldots & x_{0}
\end{array}
\right).
\end{eqnarray*}

Comparison of entries now shows that the bottom row the matrix is a single palindrome:
\begin{equation} \label{eq:matrix2b}
\left(
\begin{array}{cr}
y_0~[y_1~ \ldots ~y_{\ell-1}] &  y_{\ell}  ~ [y_{\ell-1} ~  \ldots~y_{1}] \\
\mbox{[}x_{0} \ldots \ \ \ldots \ \ x_{\ell-1}]&[x_{\ell-1} \ldots ~\ \ \ldots~x_{0}]
\end{array}
\right).
\end{equation}

The structural results above have an immediate consequence for selfdual $\langle \alpha , \beta \rangle$-semimodules in the case of $\alpha$ and $\beta$ odd: 

\begin{proposition}
Let $\Gamma=\langle \alpha , \beta \rangle$ with $\alpha, \beta$ odd. Then any selfdual $\Gamma$-semimodule $\Delta$ is minimally generated by an odd number of elements.
\end{proposition}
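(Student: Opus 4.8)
The number of minimal generators of $\Delta$ is exactly $|I| = n+1$, so the claim is equivalent to the assertion that $n$ is even, i.e.\ that among the three cases distinguished above only Case~I can occur. The plan is therefore to eliminate Cases~II-a and~II-b---both of which force $n$ to be odd---by a parity argument, exploiting the fact that in the two-row matrix associated with $\Delta$ the top row sums to $\alpha$ and the bottom row sums to $\beta$.

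First I would record this row-sum normalisation explicitly from the general matrix displayed above, so that the telescoping identities $\sum_i y_i = \alpha$ and $\sum_i x_i = \beta$ are on hand. Next, in Case~II-a the normal form (\ref{eq:matrix2a}) shows that the top row is a full palindrome of even length $2\ell$, namely the block $(y_0,\ldots,y_{\ell-1})$ followed by its reversal; its sum is therefore $2\sum_{i=0}^{\ell-1} y_i$, an even integer. Since this sum equals $\alpha$, the hypothesis that $\alpha$ is odd rules out Case~II-a. Symmetrically, in Case~II-b the normal form (\ref{eq:matrix2b}) exhibits the \emph{bottom} row as a full palindrome of even length $2\ell$, so its sum $2\sum_{i=0}^{\ell-1} x_i$ is even; as it equals $\beta$, the oddness of $\beta$ excludes Case~II-b.

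With both odd-$n$ cases eliminated, only Case~I survives, in which the matrix has $n+1$ columns with $n$ even, and hence $\Delta$ is minimally generated by an odd number of elements. The only point requiring care is the bookkeeping in the previous paragraph: one must check that it is precisely the top row in (\ref{eq:matrix2a}) and the bottom row in (\ref{eq:matrix2b}) that constitute a palindrome of \emph{even} length, and match them to $\alpha$ resp.\ $\beta$. In each case the complementary row carries an extra singleton entry (an $x_{\ell-1}$ and $x_n$ in Case~II-a, a $y_0$ and $y_\ell$ in Case~II-b) and so need not have even sum, which is why the argument must be applied to the correct row. I expect no genuine difficulty beyond this, since all the structural work has already been carried out in the case analysis preceding the proposition.
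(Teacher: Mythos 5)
Your proposal is correct and follows essentially the same route as the paper: the paper's proof likewise observes that an even number of columns would force one row of the matrix to be an even-length palindrome (top row in form (\ref{eq:matrix2a}), bottom row in form (\ref{eq:matrix2b})), making its sum even, contradicting the oddness of $\alpha$ resp.\ $\beta$. Your version merely spells out the case bookkeeping that the paper compresses into one sentence, and it is accurate throughout.
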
 

\begin{proof}
The size of the minimal set of generators of $\Delta$ equals the number of columns in the matrix associated to $\Delta$. If this number was even, one of the row sums would be even since by the results above one of the rows has to be palindromic. But the rows sum up to $\alpha$~resp.~$\beta$ and so both row sums are odd by assumption, a contradiction.
\end{proof}

Next we want to count the selfdual semimodules in the case $\alpha, \beta$ odd. By the previous result we only have to count the matrices of the form given by (\ref{eq:matrix1}). Since all entries are positive we have $\sum_{i=0}^{\ell} y_i \leq \lfloor \frac{\alpha}{2} \rfloor$ and $\sum_{i=0}^{\ell} x_i \leq \lfloor \frac{\beta}{2} \rfloor$. Therefore the partial sums $\sum_{j=0}^{r} y_j$ for $r=0, \ldots , \ell$ have to be chosen in the range $1, \ldots , \lfloor\frac{\alpha}{2}\rfloor$. Hence there are ${\lfloor\frac{\alpha}{2}\rfloor \choose \ell}$ different $y$-palindromes of length $\ell$. Similarly there are ${\lfloor\frac{\beta}{2}\rfloor \choose \ell}$ different $x$-palindromes of lenght $\ell$. Any combination of an $x$- and a $y$-palindrome yields a matrix of the form (\ref{eq:matrix1}), and so there are ${\lfloor\frac{\alpha}{2}\rfloor \choose \ell}{\lfloor\frac{\beta}{2}\rfloor \choose \ell}$ of them.

\begin{theorem}\label{5T1}
Let $\Gamma=\langle \alpha , \beta \rangle$ with $\alpha, \beta$ odd. Then there are
\[
{\lfloor\frac{\alpha}{2}\rfloor \choose \ell}{\lfloor\frac{\beta}{2}\rfloor \choose \ell}
\]
selfdual $\Gamma$-semimodules with $2\ell + 1$ generators for $\ell=0,\ldots ,  \lfloor \frac{\alpha}{2}\rfloor$. In total there are
\[
{\lfloor \frac{\alpha}{2}\rfloor + \lfloor \frac{\beta}{2}\rfloor\choose \lfloor \frac{\alpha}{2}\rfloor}
\]
selfdual semimodules.
\end{theorem}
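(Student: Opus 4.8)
The plan is to separate the two assertions, because the first is essentially already established in the discussion preceding the statement. For the count of selfdual semimodules with $2\ell+1$ generators I would assemble the structural analysis carried out just above: by the preceding proposition a selfdual $\Gamma$-semimodule with $\alpha,\beta$ odd has an odd number of generators, so only Case~I occurs and its associated matrix can be brought into the canonical form (\ref{eq:matrix1}), in which the top row and the bottom row are each palindromic about a common central column. Specifying such a matrix then reduces to choosing the $y$-palindrome (top row, row sum $\alpha$) and the $x$-palindrome (bottom row, row sum $\beta$) independently; counting each by its strictly increasing sequence of partial sums in the admissible range gives $\binom{\lfloor \alpha/2\rfloor}{\ell}$ and $\binom{\lfloor \beta/2\rfloor}{\ell}$ respectively, and the product is the claimed number.

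The one point I would take care to nail down is that distinct pairs of palindromes genuinely yield non-isomorphic semimodules, i.e.\ that the normal form (\ref{eq:matrix1}) selects a unique representative of its cyclic-equivalence class of matrices. This is the only real subtlety: since isomorphism classes correspond to two-row matrices \emph{only up to cyclic permutation of columns}, I would check that insisting the odd-sized palindromic block of the top row be centred pins the cyclic ambiguity down completely, so that the assignment (pair of palindromes)$\,\mapsto\,$(selfdual semimodule) is a bijection. I would also verify the boundary case $\ell=0$, corresponding to the single-column matrix and the selfdual semimodule $\Gamma$, which contributes the term $\binom{\lfloor\alpha/2\rfloor}{0}\binom{\lfloor\beta/2\rfloor}{0}=1$.

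For the total, I would simply sum the per-$\ell$ counts. Because $\alpha<\beta$ gives $\lfloor \alpha/2\rfloor \le \lfloor \beta/2\rfloor$, every term with $\ell>\lfloor\alpha/2\rfloor$ vanishes and the summation range may be extended without changing the value; rewriting $\binom{\lfloor\beta/2\rfloor}{\ell}=\binom{\lfloor\beta/2\rfloor}{\lfloor\beta/2\rfloor-\ell}$ I would then apply the Chu--Vandermonde identity
\[
\sum_{\ell}\binom{m}{\ell}\binom{n}{n-\ell}=\binom{m+n}{n}
\]
with $m=\lfloor\alpha/2\rfloor$ and $n=\lfloor\beta/2\rfloor$. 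This delivers $\binom{\lfloor\alpha/2\rfloor+\lfloor\beta/2\rfloor}{\lfloor\beta/2\rfloor}=\binom{\lfloor\alpha/2\rfloor+\lfloor\beta/2\rfloor}{\lfloor\alpha/2\rfloor}$, as desired. The computational core of the theorem is thus a single invocation of Vandermonde's identity, while the main conceptual care lies in the bijection described above; the remaining steps are bookkeeping already performed in the paragraph preceding the statement.
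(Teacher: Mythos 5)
Your proposal is correct and is essentially the paper's own proof: the paper likewise obtains the count for $2\ell+1$ generators by enumerating matrices of the normal form (\ref{eq:matrix1}) via the strictly increasing partial sums of the two half-palindromes in the ranges $1,\ldots,\lfloor\frac{\alpha}{2}\rfloor$ and $1,\ldots,\lfloor\frac{\beta}{2}\rfloor$, and it derives the total by exactly the same Vandermonde convolution. The uniqueness issue you flag is genuine but is left implicit in the paper too; note only that centring the top-row palindrome does \emph{not} by itself pin down the cyclic representative (the top row may be periodic, e.g.\ constant), whereas the full check follows quickly from coprimality: if two distinct cyclic shifts of one matrix both had the form (\ref{eq:matrix1}), then both rows would be invariant under a nontrivial rotation (the composition of the two reflections fixing them) and hence periodic, so the row sums $\alpha$ and $\beta$ would share a factor greater than $1$.
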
 

\begin{proof}
The first assertion was shown above, whereas the second follows by an application of the Vandermonde convolution:
\[
\sum_{\ell=0}^{\lfloor \frac{\alpha}{2}\rfloor} {\lfloor \frac{\alpha}{2}\rfloor \choose \ell}{\lfloor \frac{\beta}{2}\rfloor \choose \ell} =
\sum_{\ell=0}^{\lfloor \frac{\alpha}{2}\rfloor} {\lfloor \frac{\alpha}{2}\rfloor \choose \ell}{\lfloor \frac{\beta}{2}\rfloor \choose \lfloor \frac{\beta}{2}\rfloor -\ell }=
{\lfloor \frac{\alpha}{2}\rfloor + \lfloor \frac{\beta}{2}\rfloor\choose \lfloor \frac{\alpha}{2}\rfloor}.  
\]
\end{proof}

Next we investigate the case of one generator of the semigroup being even. It turns out that this case can be reduced to the case of $\alpha, \beta$ being odd by means of a bijective map. 
\medskip

Let $\alpha$ be even, then $\beta$ is odd since the generators of the semigroup have to be coprime. By our results on the structure of matrices associated to selfdual $\Gamma$-semimodules, in this case the matrix for a selfdual semimodule has to be of the form (\ref{eq:matrix1}) or (\ref{eq:matrix2a}), the form (\ref{eq:matrix2b}) cannot occur since the sum of the entries in the bottom row has to be odd. Next we give a mapping which sends the matrices of selfdual $\langle \alpha,\beta\rangle$-semimodules to those of selfdual $\langle\alpha+1,\beta \rangle$-semimodules. For the matrices of form (\ref{eq:matrix1}) this map is nearly obvious:
\[
\left(
\begin{array}{llllr}
\mbox{[} \ \ldots \ ]  &y_{\ell}&  [&  \ldots    &  ] \\
\mbox{[} \ \ldots \ ]  &[           &  \ldots & ]    & x_n
\end{array}
\right) \mapsto
\left(
\begin{array}{llllr}
\mbox{[} \ \ldots \ ]  &y_{\ell}+1&  [&  \ldots    &  ] \\
\mbox{[} \ \ldots \ ]  &[  \    \ldots    &  \ldots \ & ]    & x_n
\end{array}
\right).
\]

For those matrices of form (\ref{eq:matrix2a}) note that the sum of the bottom row entries being odd, exactly one of the entries $x_{\ell -1}$ and $x_n$ has to be even. Because of the equivalence of matrices under cyclic permutation of columns we may assume that $x_{\ell -1}$ is even, and so the map can be defined by
\[
\left(
\begin{array}{ll}
\mbox{[}\ldots \ \ \ldots \ \ \ ]&[ \ldots ~\ \ \ldots~]\\
\mbox{[} ~ \ldots ~] ~x_{\ell-1}& [ ~  \ldots~ ] \ \ x_n
\end{array}
\right)
\mapsto
\left(
\begin{array}{lcl}
\mbox{[}\ldots \ \ \ldots \ \ \ ]&1&[ \ldots ~\ \ \ldots~]\\
\mbox{[} ~ \ldots ~\ \frac{x_{\ell-1}}{2}]& [\frac{x_{\ell-1}}{2}& ~  \ldots~ ] \ \ x_n
\end{array}
\right).
\]

Hence we have a map $f$ from the set of equivalence classes of matrices associated to selfdual $\langle \alpha,\beta \rangle$-semimodules to those of equivalence classes of matrices associated to selfdual $\langle \alpha+1,\beta \rangle$-semimodules. This map is bijective: One easily checks that the inverse map is given by
\begin{eqnarray*}
&&f^{-1} \left (\left(
\begin{array}{clr}
\mbox{[}~ \ldots ~y_{\ell-1}]  &~y_{\ell}~\ [   y_{\ell-1}~\ldots        &                                    ] \\
\mbox{[}~ \ldots ~x_{\ell-1}]  &[ x_{\ell -1}~ \ldots  \ \             & ]~                                x_n
\end{array}
\right)\right )\\
&=&
\left \{
\begin{array}{ll}
\left(
\begin{array}{clr}
\mbox{[}~ \ldots ~y_{\ell-1}]  &~y_{\ell}-1~\ [   y_{\ell-1}~\ldots        &                                    ] \\
\mbox{[}~ \ldots ~x_{\ell-1}]  &[ x_{\ell -1}~ \ldots  \ \             & ]~                                x_n
\end{array}
\right)&\mbox{for~} y_{\ell}>1 \\
\\
\left(
\begin{array}{ll}
\mbox{[}~ \ldots ~y_{\ell-1}] ~  [   y_{\ell-1}                  &~\ldots  \ \ \ \ \ \ \ \ \ \ \ \  \ ] \\
\mbox{[}~ \ldots ~x_{\ell-2}]~2x_{\ell-1}  & [ x_{\ell -2}~ \ldots  \ \     ]~     x_n
\end{array}
\right)& \mbox{for~} y_{\ell}=1.
\end{array}
\right .
\end{eqnarray*}

\begin{theorem}\label{5T2}
Let $\Gamma=\langle \alpha , \beta \rangle$ with $\alpha$ even and $\beta$ odd. Then there are in total
\[
{\lfloor \frac{\alpha}{2}\rfloor + \lfloor \frac{\beta}{2}\rfloor\choose \lfloor \frac{\alpha}{2}\rfloor}
\]
selfdual semimodules. In particular there are
\[
{\frac{\alpha}{2}-1 \choose \ell}{\lfloor\frac{\beta}{2}\rfloor \choose \ell}
\]
selfdual $\Gamma$-semimodules with $2\ell+1$ generators for $\ell=0,\ldots , \frac{\alpha}{2}-1$, and
\[
{\frac{\alpha}{2}-1 \choose \ell -1}{\lfloor\frac{\beta}{2}\rfloor \choose \ell}
\]
selfdual $\Gamma$-semimodules with $2\ell$ generators for $\ell=1,\ldots , \frac{\alpha}{2}$.
\end{theorem}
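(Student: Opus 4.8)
The plan is to enumerate the two admissible matrix shapes (\ref{eq:matrix1}) and (\ref{eq:matrix2a}) for selfdual semimodules, and to lean on the bijection $f$ constructed above between selfdual $\langle\alpha,\beta\rangle$-semimodules and selfdual $\langle\alpha+1,\beta\rangle$-semimodules, noting that $\alpha+1$ and $\beta$ are both odd and that $\lfloor(\alpha+1)/2\rfloor=\alpha/2=\lfloor\alpha/2\rfloor$. The total count is then immediate: since $f$ is a bijection, the number of selfdual $\langle\alpha,\beta\rangle$-semimodules equals the number of selfdual $\langle\alpha+1,\beta\rangle$-semimodules, which by Theorem \ref{5T1} is $\binom{\lfloor\alpha/2\rfloor+\lfloor\beta/2\rfloor}{\lfloor\alpha/2\rfloor}$.

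For the semimodules with an odd number $2\ell+1$ of generators, which correspond exactly to the matrices of shape (\ref{eq:matrix1}), I would repeat the counting argument preceding Theorem \ref{5T1}, but keep track of the parity of $\alpha$. The top row is now an odd palindrome whose entries sum to the \emph{even} number $\alpha$; hence its central entry must be even, so at least $2$, and the admissible partial sums of the palindrome half range only over $1,\dots,\frac{\alpha}{2}-1$, yielding $\binom{\alpha/2-1}{\ell}$ choices instead of $\binom{\lfloor\alpha/2\rfloor}{\ell}$. The bottom row is an even palindrome with one extra entry summing to the odd number $\beta$, exactly as in the both-odd case, so it contributes $\binom{\lfloor\beta/2\rfloor}{\ell}$ choices. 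Multiplying gives the stated count.

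The even case ($2\ell$ generators, shape (\ref{eq:matrix2a})) is where I expect the real difficulty. A direct enumeration is unpleasant: the bottom row now decomposes as an \emph{odd} palindrome together with a trailing entry, so its two free parameters obey a mixed-parity condition, and, worse, the canonical representative of shape (\ref{eq:matrix2a}) retains a residual cyclic symmetry (already for two columns, the cyclic swap of the columns identifies $(x_{\ell-1},x_n)$ with $(x_n,x_{\ell-1})$), so a naive count over-counts equivalence classes. I would therefore sidestep the direct count and invoke $f$ instead. Reading off the explicit formulas for $f$, one sees that it carries shape-(\ref{eq:matrix1}) matrices with $2\ell+1$ columns to $2\ell+1$ columns and shape-(\ref{eq:matrix2a}) matrices with $2\ell$ columns to $2\ell+1$ columns (by inserting one central column); since all selfdual $\langle\alpha+1,\beta\rangle$-semimodules are odd-generated, the $(2\ell+1)$-generated ones are partitioned by $f$ into the images of these two families. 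Subtracting the shape-(\ref{eq:matrix1}) contribution from the target count $\binom{\alpha/2}{\ell}\binom{\lfloor\beta/2\rfloor}{\ell}$ of Theorem \ref{5T1} and applying Pascal's identity $\binom{\alpha/2}{\ell}-\binom{\alpha/2-1}{\ell}=\binom{\alpha/2-1}{\ell-1}$ then gives the claimed $\binom{\alpha/2-1}{\ell-1}\binom{\lfloor\beta/2\rfloor}{\ell}$.

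The remaining routine points are to pin down the index ranges from positivity of the matrix entries ($\ell\le\frac{\alpha}{2}-1$ in the odd case, $\ell\le\frac{\alpha}{2}$ in the even case) and, as a consistency check, to confirm that summing the two refined counts reproduces the total by first combining them with Pascal's rule and then applying the Vandermonde convolution, just as in the proof of Theorem \ref{5T1}. The main obstacle throughout is the over-counting caused by the residual symmetry of shape (\ref{eq:matrix2a}); the construction of $f$ is precisely what lets me avoid confronting it directly.
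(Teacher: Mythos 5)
Your proposal is correct and takes essentially the same route as the paper: the total count via the bijection $f$ combined with Theorem \ref{5T1} (using $\lfloor(\alpha+1)/2\rfloor=\alpha/2$), the odd-generator case by the palindrome enumeration with the parity refinement forcing the central entry $y_{\ell}$ to be even, hence at least $2$, and the even-generator case by pushing the shape-(\ref{eq:matrix2a}) matrices through $f$ rather than counting them directly. The only (cosmetic) divergence is in that last step: the paper counts the $f$-image directly, as shape-(\ref{eq:matrix1}) matrices with row sums $\alpha+1$ resp.\ $\beta$ and central entry $y_{\ell}=1$, which pins the last partial sum at $\frac{\alpha}{2}$ and yields $\binom{\alpha/2-1}{\ell-1}\binom{\lfloor\beta/2\rfloor}{\ell}$, whereas you reach the same number by complementary counting inside Theorem \ref{5T1} plus Pascal's identity --- the identical split of $\binom{\alpha/2}{\ell}$ into the classes $y_{\ell}\geq 3$ and $y_{\ell}=1$.
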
 

\begin{proof}
The first assertion is clear because of Theorem \ref{5T1} and the bijection constructed above.
The number of selfdual $\Gamma$-semimodules with odd number of generators follows as in the proof of Theorem \ref{5T1}.
Finally, the selfdual $\langle \alpha,\beta \rangle$-semimodules with $2\ell$ generators are in bijection with the matrices of form (\ref{eq:matrix1}) with row sums $\alpha+1$~resp.~$\beta$ and $y_{\ell}=1$. 
This means, $\alpha+1-2\sum_{i=0}^{\ell-1}y_i=1$ or $\sum_{i=0}^{\ell-1}y_i=\frac{\alpha}{2}$.
There are ${\frac{\alpha}{2}-1 \choose \ell -1}$ possibilities for the $y$-block, since the partial sums $\sum_{i=0}^{\ell-2} y_i$ have to be chosen in the range $1, \ldots , \frac{\alpha}{2}-1$, and any of these blocks can be combined with one of the ${\lfloor \frac{\beta}{2} \rfloor \choose \ell}$ $x$-blocks.
\end{proof}

The case of $\alpha$ odd and $\beta$ even can be treated with almost analogous arguments. Now 
the matrix for a selfdual semimodule has to be of the form (\ref{eq:matrix1}) or (\ref{eq:matrix2b}), and the mapping which sends the matrices of selfdual $\langle \alpha,\beta\rangle$-semimodules to those of selfdual $\langle\alpha,\beta +1 \rangle$-semimodules can be defined by
\[
\left(
\begin{array}{llllr}
\mbox{[} \ \ldots \ ]  &y_{\ell}&  [&  \ldots    &  ] \\
\mbox{[} \ \ldots \ ]  &[           &  \ldots & ]    & x_n
\end{array}
\right) \mapsto
\left(
\begin{array}{llllr}
\mbox{[} \ \ldots \ ]  &y_{\ell}&  [&  \ldots    &  ] \\
\mbox{[} \ \ldots \ ]  &[  \    \ldots    &  \ldots \ & ]    & x_n+1
\end{array}
\right).
\]
for the first form and by
\[
\left(
\begin{array}{cr}
y_0~[y_1~ \ldots ~y_{\ell-1}] &  y_{\ell}  ~ [y_{\ell-1} ~  \ldots~y_{1}] \\
\mbox{[}x_{0} \ldots \ \ \ldots \ \ x_{\ell-1}]&[x_{\ell-1} \ldots ~\ \ \ldots~x_{0}]
\end{array}
\right)
\mapsto
\left(
\begin{array}{lcl}
\frac{y_{0}}{2} ~ [\ldots \ \ \ldots \ \ \ ]&y_{\ell}&[ \ldots ~\ \ \ldots~]~\frac{y_{0}}{2}\\
\mbox{[} ~ \ldots ~\ \ \ \ \  \ \ldots]& [\ldots& ~  \ldots~ \ \ \ \ \ \ \ ] \ \  1
\end{array}
\right)
\]
for the second form. Similar to the previous case we obtain:

\begin{theorem}\label{5T3}
Let $\Gamma=\langle \alpha , \beta \rangle$ with $\alpha$ odd and $\beta$ even. Then there are in total
\[
{\lfloor \frac{\alpha}{2}\rfloor + \lfloor \frac{\beta}{2}\rfloor\choose \lfloor \frac{\alpha}{2}\rfloor}
\]
selfdual semimodules. In particular there are
\[
{\lfloor\frac{\alpha}{2}\rfloor \choose \ell}{\frac{\beta}{2}-1 \choose \ell}
\]
selfdual $\Gamma$-semimodules with $2\ell+1$ generators for $\ell=0,\ldots ,\lfloor \frac{\alpha}{2}\rfloor$, and
\[
{\lfloor\frac{\alpha}{2}\rfloor \choose \ell}{\frac{\beta}{2}-1 \choose \ell -1}
\]
selfdual $\Gamma$-semimodules with $2\ell$ generators for $\ell=1,\ldots , \lfloor\frac{\alpha}{2} \rfloor$.
\end{theorem}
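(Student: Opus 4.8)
The plan is to mirror the proof of Theorem~\ref{5T2}, interchanging the roles of the two rows of the associated matrix. By the structural discussion preceding the statement, a selfdual $\langle\alpha,\beta\rangle$-semimodule with $\alpha$ odd and $\beta$ even has associated matrix of the form \eqref{eq:matrix1} or \eqref{eq:matrix2b}, the form \eqref{eq:matrix2a} being impossible because its top row is an even-length palindrome and would force $\alpha$ to be an even row sum. For the total count I would invoke the displayed map $f$: it carries these matrices to matrices of selfdual $\langle\alpha,\beta+1\rangle$-semimodules, and since $\beta+1$ is odd, Theorem~\ref{5T1} computes their number as $\binom{\lfloor\alpha/2\rfloor+\lfloor(\beta+1)/2\rfloor}{\lfloor\alpha/2\rfloor}$. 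Because $\beta$ is even we have $\lfloor(\beta+1)/2\rfloor=\beta/2=\lfloor\beta/2\rfloor$, so the first assertion follows once $f$ is known to be bijective.

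For the semimodules with an odd number $2\ell+1$ of generators (so $n=2\ell$ is even, Case~I, form \eqref{eq:matrix1}) I would count exactly as in Theorem~\ref{5T1}. The top half $y_0,\dots,y_{\ell-1}$ is encoded by its $\ell$ strictly increasing partial sums in $\{1,\dots,\lfloor\alpha/2\rfloor\}$, giving $\binom{\lfloor\alpha/2\rfloor}{\ell}$ choices. The only change is in the bottom row: its sum $\beta$ is now even, so the unpaired entry $x_n$ is even and at least $2$, whence $\sum_{i=0}^{\ell-1}x_i\le \beta/2-1$ and there are $\binom{\beta/2-1}{\ell}$ choices; the product is the claimed $\binom{\lfloor\alpha/2\rfloor}{\ell}\binom{\beta/2-1}{\ell}$.

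For the semimodules with an even number $2\ell$ of generators (form \eqref{eq:matrix2b}) I would pass through the bijection $f$. Its second clause turns such a matrix into a matrix of the form \eqref{eq:matrix1} for $\langle\alpha,\beta+1\rangle$ with $2\ell+1$ columns whose unpaired bottom entry equals $x_n=1$; conversely the inverse, built as in Theorem~\ref{5T2}, recovers these from the case $x_n=1$, whereas $x_n\ge 3$ is produced by the first clause. Thus these semimodules are in bijection with matrices of the form \eqref{eq:matrix1} for $\langle\alpha,\beta+1\rangle$ of row sums $\alpha$ and $\beta+1$ with $x_n=1$. The top half again contributes $\binom{\lfloor\alpha/2\rfloor}{\ell}$, while $x_n=1$ forces $\sum_{i=0}^{\ell-1}x_i=\beta/2$ exactly, so the bottom half is encoded by $\ell$ partial sums with largest value $\beta/2$, giving $\binom{\beta/2-1}{\ell-1}$ and the product $\binom{\lfloor\alpha/2\rfloor}{\ell}\binom{\beta/2-1}{\ell-1}$.

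The main obstacle is the well-definedness and bijectivity of $f$, and the only genuinely new point lies in its second clause, which halves $y_0$ and hence requires $y_0$ to be even. Since the top-row sum $\alpha=y_0+2(y_1+\dots+y_{\ell-1})+y_\ell$ is odd, exactly one of $y_0,y_\ell$ is even; rotating the matrix by $\ell$ columns sends the top row $(y_0,y_1,\dots,y_{\ell-1},y_\ell,y_{\ell-1},\dots,y_1)$ to $(y_\ell,y_{\ell-1},\dots,y_1,y_0,y_1,\dots,y_{\ell-1})$, interchanging $y_0$ and $y_\ell$ while preserving both the form \eqref{eq:matrix2b} and the even-palindrome shape of the bottom row, so up to the equivalence $\equiv$ one may assume $y_0$ even. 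The remaining checks, that each clause of $f$ lands in the form \eqref{eq:matrix1}, that the two clauses cover the target bijectively (the distinction $x_n=1$ versus $x_n\ge 3$ being exhaustive because an even $x_n\ge 2$ becomes $x_n\ge 3$ under the first clause), and that the inverse is two-sided, are routine transcriptions of the even-$\alpha$ argument and should pose no difficulty.
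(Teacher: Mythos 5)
Your proof is correct and takes essentially the same route as the paper, which for Theorem~\ref{5T3} only defines the two displayed maps into selfdual $\langle\alpha,\beta+1\rangle$-matrices and declares the rest ``similar to the previous case'' --- i.e.\ the arguments of Theorems~\ref{5T1} and~\ref{5T2} that you transcribe faithfully (the direct palindrome count for $2\ell+1$ generators, the $x_n=1$ clause of the bijection for $2\ell$ generators, and the total via Theorem~\ref{5T1} using $\lfloor(\beta+1)/2\rfloor=\lfloor\beta/2\rfloor$). Your rotation-by-$\ell$ argument, showing that up to cyclic equivalence one may assume $y_0$ even in form \eqref{eq:matrix2b}, correctly supplies the parity normalisation that the paper makes explicit only in the even-$\alpha$ case.
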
 

The previous theorems in particular recover a result already given by Gorsky and Mazin \cite[Theorem~2.6]{gm}:

\begin{corollary}
There are
$
{\lfloor \frac{\alpha}{2}\rfloor + \lfloor \frac{\beta}{2}\rfloor\choose \lfloor \frac{\alpha}{2}\rfloor} 
$
selfdual $\langle \alpha , \beta \rangle$-semimodules.
\end{corollary}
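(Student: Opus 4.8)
The plan is to observe that the three preceding theorems already exhaust every possible parity configuration of the pair $(\alpha,\beta)$, and that each of them yields exactly the asserted binomial coefficient as the total count. The only preliminary point to settle is that $\alpha$ and $\beta$ cannot both be even: since $\Gamma=\langle\alpha,\beta\rangle$ is a numerical semigroup, its two generators are necessarily coprime, so at least one of them is odd. This is what guarantees that the case distinction below is genuinely complete.

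Granting this, I would split into the three mutually exclusive and jointly exhaustive cases. If both $\alpha$ and $\beta$ are odd, then Theorem \ref{5T1} applies and gives ${\lfloor \frac{\alpha}{2}\rfloor + \lfloor \frac{\beta}{2}\rfloor\choose \lfloor \frac{\alpha}{2}\rfloor}$ selfdual semimodules. If $\alpha$ is even and $\beta$ is odd, the same total is furnished by Theorem \ref{5T2}; and if $\alpha$ is odd and $\beta$ is even, by Theorem \ref{5T3}. Since these three cases cover all admissible pairs $(\alpha,\beta)$ and each produces the identical count $\binom{\lfloor \alpha/2\rfloor+\lfloor\beta/2\rfloor}{\lfloor\alpha/2\rfloor}$, the corollary follows at once, and in particular the value of the total is manifestly independent of which parity case one is in.

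The substantive work has therefore already been carried out in the three theorems --- in particular in the Vandermonde convolution argument of Theorem \ref{5T1} and in the explicit bijections of Theorems \ref{5T2} and \ref{5T3} that reduce the mixed-parity cases to the all-odd case. Consequently there is no genuine obstacle remaining at this stage: the corollary is a pure bookkeeping step assembling the three separate enumerations. The one point that must not be overlooked is the invocation of coprimality to rule out the case of $\alpha$ and $\beta$ both even, since otherwise the trichotomy underlying the argument would be incomplete and the reduction to Gorsky--Mazin's formula would fail to be justified.
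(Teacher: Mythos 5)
Your proposal is correct and matches the paper's own (implicit) reasoning: the corollary is stated as an immediate consequence of Theorems \ref{5T1}, \ref{5T2} and \ref{5T3}, which cover all parity cases since coprimality of $\alpha$ and $\beta$ rules out both being even. Your explicit remark on coprimality is a sensible addition, but the argument is otherwise the same bookkeeping step the paper performs.
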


\section{Relationship between $\Delta^{*}$ and $\mathrm{Syz}(\Delta)$}\label{section6}

We conclude this paper with some remarks concerning the connection between the dual of the $\langle \alpha, \beta \rangle$-semimodule $\Delta$ and the semimodule $\mathrm{Syz}(\Delta)$ introduced in our previous paper \cite{mu2}. We recall the main properties of $\mathrm{Syz}$.
If $\Delta$ is generated by an $\langle \alpha, \beta \rangle$-lean set $I$, then
\[
\mathrm{Syz}(\Delta):=\bigcup_{\substack{i,i' \in I\\i \neq i'}} \left ( \big(\langle \alpha, \beta \rangle+ i \big) \cap \big(\langle \alpha, \beta \rangle + i' \big)  \right ).
\]
The semimodule $\mathrm{Syz}(\Delta)$ consists of those elements in $\Delta$ which admit more than one presentation of the form $i + x$ with $i \in I, x \in \langle \alpha, \beta \rangle$, and it has also a meaning in terms of lattice paths: as mentioned in the previous section the elements of $I$ correspond to the ES-turning points of a lattice path from $(0,\beta)$ to $(\alpha, 0)$ and the SE-turning points of this path can be identified with the elements minimally generating $\mathrm{Syz}(\Delta)$.

\begin{lemma}
The minimal sets of generators of $\Delta_I^{*}$ and $\mathrm{Syz}(\Delta_I)$ are in correspondence via the map $x \mapsto \alpha \beta -x$.
\end{lemma}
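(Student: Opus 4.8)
The plan is to work directly with the explicit generator descriptions provided by the theory built up in the earlier sections. From Theorem \ref{3T1} I have the minimal generators of $\Delta_I^*$ in closed form, namely $a_1\alpha$, the elements $a_{k+1}\alpha + b_k\beta$ for $k=1,\ldots,n-1$, and $b_n\beta$. On the other side, I need the minimal generators of $\mathrm{Syz}(\Delta_I)$; these are exactly the degrees of the syzygy bivectors, which in the language of the fundamental couple $[I,J]$ are the elements $j_k$ of $J$. So the first step is to read off both generating sets as concrete arithmetic expressions in the $a_k$, $b_k$, $\alpha$, $\beta$.

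The key computation is to identify the generators of $\mathrm{Syz}(\Delta_I)$. Using $\mathrm{Syz}(\Delta_I) = \bigcup_{i\neq i'}((\Gamma+i)\cap(\Gamma+i'))$ together with Rosales' characterisation, the minimal generator arising from an intersection $(\Gamma+i_k)\cap(\Gamma+i_{k+1})$ of consecutive gaps is the smallest common element; in the lattice-path picture these are precisely the SE-turning points. A direct check (or appeal to Corollary 4.3 of \cite{mu2}, which produces the bivector degrees $j_k$) shows that the minimal generators of $\mathrm{Syz}(\Delta_I)$ are $\beta\alpha - a_1\alpha$, the elements $\alpha\beta - a_{k+1}\alpha - b_k\beta$ for $k=1,\ldots,n-1$, and $\alpha\beta - b_n\beta$. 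Once these are in hand, I simply apply $x \mapsto \alpha\beta - x$ to each of them and observe that $\alpha\beta - (\alpha\beta - a_1\alpha) = a_1\alpha$, that $\alpha\beta - (\alpha\beta - a_{k+1}\alpha - b_k\beta) = a_{k+1}\alpha + b_k\beta$, and that $\alpha\beta - (\alpha\beta - b_n\beta) = b_n\beta$. These are exactly the generators of $\Delta_I^*$ from Theorem \ref{3T1}, so the involution $x\mapsto\alpha\beta-x$ carries one minimal generating set bijectively onto the other.

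I expect the main obstacle to be pinning down the minimal generators of $\mathrm{Syz}(\Delta_I)$ in the correct normalised form, rather than the final matching, which is then a one-line substitution. The subtlety is that $\mathrm{Syz}(\Delta_I)$ is defined as a union of pairwise intersections over all pairs $i\neq i'$, so I must argue that the \emph{minimal} generators come only from consecutive pairs (in the $<_L$-ordering) plus the wrap-around pair giving the $a_1\alpha$- and $b_n\beta$-type contributions, and that non-consecutive intersections contribute nothing new. This is exactly the content guaranteed by the bivector description $f_0,\ldots,f_n$ recalled in Section \ref{section4}: the $n+1$ bivectors suffice, and their degrees $j_k$ form the minimal generating set of $\Delta_J = \mathrm{Syz}(\Delta_I)$. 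Reading the exponents off $f_0 = (t^{(\beta-a_1)\alpha}, -t^{b_1\beta}, 0,\ldots,0)$ and its companions yields precisely the list above, so I would cite that description to justify the form of the generators and then conclude by the symmetry $x\mapsto\alpha\beta-x$.
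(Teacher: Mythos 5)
Your proposal is correct and follows essentially the same route as the paper: both identify the minimal generators of $\mathrm{Syz}(\Delta_I)$ as $j_0=(\beta-a_1)\alpha$, $j_k=\alpha\beta-a_{k+1}\alpha-b_k\beta$ for $k=1,\ldots,n-1$, and $j_n=(\alpha-b_n)\beta$, and then match them against the generators from Theorem \ref{3T1} via $x\mapsto\alpha\beta-x$. The only cosmetic difference is the citation used to justify the form of $J$ --- you read the $j_k$ off the bivector degrees of Corollary 4.3 of \cite{mu2} as recalled in Section \ref{section4}, while the paper invokes the fundamental-couple description (Theorem 4.2 of \cite{mu2} together with Corollary 3.21(c) of \cite{mu}) --- and these amount to the same fact.
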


\begin{proof}
Let $J$ be the minimal system of generators of $\mathrm{Syz}(\Delta_I)$, then, by Theorem 4.2 of \cite{mu2}, $[I,J]$ is a fundamental couple in the sense of \cite[Definition 3.10]{mu}.  Let $I=\{i_0=0, i_1, \ldots , i_n\}$ with $i_k=\alpha \beta -a_k \alpha -b_k \beta$. According to \cite[Corollary 3.21(c)]{mu} the elements of $J$ are given by 
\begin{eqnarray*}
j_0&=& (\beta -a_1)\alpha\\
j_k&=& \alpha \beta - a_{k+1}\alpha - b_k \beta \ \ \ \mbox{~for~} k =1, \ldots, n-1\\
j_n&=&(\alpha - b_n)\beta,
\end{eqnarray*}
so Theorem \ref{3T1} yields the conclusion.
\end{proof}

As with the dual, the matrix description of $\mathrm{Syz}(\Delta)$ can be obtained from those of $\Delta$ by certain permutation of entries: 
\[
\mathrm{Syz}\left (\left(
\begin{array}{cccc}
y_0 & y_1 & \ldots & y_n   \\
x_0 & x_1 & \ldots & x_n 
\end{array}
\right) \right )=\left(
\begin{array}{ccccc}
y_1 & y_2 & \ldots & y_n& y_0   \\
x_ 0& x_1 & \ldots & x_{n-1} & x_n 
\end{array}
\right).
\]
(Note that $\mathrm{Syz}(\Delta)=\Delta^*$ for $n\leq1$.) Therefore it is easy to compute the effect of a composition of the operators $\mathrm{Syz}(-)$ and $(-)^*$ in terms of the matrix description, revealing an interesting relation:
\[
\begin{array}{lcc}
\mathrm{Syz}\left (\left(
\begin{array}{cccc}
y_0 & y_1 & \ldots & y_n   \\
x_0 & x_1 & \ldots & x_n 
\end{array}
\right)^* \right )&\equiv& 
\mathrm{Syz}\left ( \left(
\begin{array}{cccc}
y_n &  \ldots & y_1 &  y_0   \\
x_ {n-1}&  \ldots & x_{0} & x_n 
\end{array}
\right) \right )\\
&&\\
\equiv \left(
\begin{array}{cccc}
y_{n-1} & \ldots & y_0 & y_n   \\
x_{n-1} & \ldots & x_0 & x_n 
\end{array}
\right)&\equiv&
\left(
\begin{array}{cccc}
y_{n} & y_0& \ldots & y_{n-1}   \\
x_{0} & x_1 & \ldots & x_n 
\end{array}
\right)^*\\
&&\\
\equiv \left ( \mathrm{Syz}^{-1}\left(
\begin{array}{cccc}
y_0 & y_1 & \ldots & y_n   \\
x_0 & x_1 & \ldots & x_n 
\end{array}
\right)\right )^*,&&
\end{array}
\]
in other words:

\begin{proposition}
The operators $\mathrm{Syz}(-)$ and $(-)^*$ generate a dihedral group of order  $2n$ acting on 
the set of isomorphism classes of $\langle \alpha, \beta \rangle$-semimodules with $n$ generators, where $n \geq 3$.
\end{proposition}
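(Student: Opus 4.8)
The plan is to translate both operators into their matrix descriptions—already computed in the excerpt—and recognise the resulting group as a standard presentation of a dihedral group. First I would fix the set on which the two operators act: isomorphism classes of $\langle \alpha, \beta \rangle$-semimodules with exactly $n$ generators, which by the bijection recalled in Section \ref{section5} is the set of equivalence classes (modulo cyclic permutation of columns) of two-row matrices with $n$ columns satisfying the row-sum property. Writing $\sigma := \mathrm{Syz}(-)$ and $\tau := (-)^*$, I have from the excerpt the explicit column-shuffles each one induces: $\sigma$ shifts the top row cyclically by one while fixing the bottom row, and $\tau$ reverses the top row and reverses-then-shifts the bottom row, as recorded in (\ref{eq:matrix}).

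The key computation is the one displayed immediately before the proposition, which shows $\sigma \tau \equiv \tau \sigma^{-1}$, i.e. the relation
\[
\sigma \tau = \tau \sigma^{-1}.
\]
This is precisely the defining relation of a dihedral group once I verify the orders of the two generators. The first thing I would establish is that $\sigma$ has order $n$ on the set of equivalence classes: since $\sigma$ cyclically shifts the top row by one position relative to the bottom row, applying it $n$ times returns every entry to its original column, and no smaller power can do so for a generic matrix with $n$ columns (here the hypothesis $n \geq 3$ guarantees the action is genuinely of order $n$ rather than collapsing, and rules out the degenerate coincidence $\mathrm{Syz}(\Delta) = \Delta^*$ noted for $n \leq 1$). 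Next I would check that $\tau$ is an involution: applying (\ref{eq:matrix}) twice reverses each row twice and undoes the shift, returning the original matrix up to cyclic permutation, so $\tau^2 \equiv \mathrm{id}$.

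With $\sigma^n = \tau^2 = \mathrm{id}$ and $\tau \sigma \tau^{-1} = \sigma^{-1}$ in hand, the subgroup of the permutation group of the isomorphism classes generated by $\sigma$ and $\tau$ is a quotient of the dihedral group $D_n$ of order $2n$; to conclude it is exactly $D_n$ I would note that $\sigma$ already contributes $n$ distinct elements and $\tau$ is not among them (it reverses the top row whereas all powers of $\sigma$ preserve its cyclic orientation), so the $n$ reflections $\tau, \sigma\tau, \ldots, \sigma^{n-1}\tau$ are likewise distinct from each other and from the rotations, giving $2n$ elements in total. The main obstacle I anticipate is the bookkeeping in verifying that $\sigma$ genuinely has order $n$ and that the $2n$ listed group elements are pairwise distinct \emph{as operators on equivalence classes}, since the ambient relation is equality only up to cyclic permutation of columns; one must be careful that no nontrivial word in $\sigma, \tau$ collapses to the identity merely because of the cyclic-permutation identification. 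Handling this cleanly—perhaps by tracking the induced action on the cyclic sequence of columns and observing that the relative shift between the two rows is a faithful invariant—is where the real care lies, but the computation $\sigma\tau \equiv \tau\sigma^{-1}$ does the essential work.
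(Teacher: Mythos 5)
Your proposal follows essentially the same route as the paper: the paper's proof consists precisely of the displayed matrix computation giving $\mathrm{Syz}\bigl((-)^{*}\bigr) \equiv \bigl(\mathrm{Syz}^{-1}(-)\bigr)^{*}$, i.e.\ the relation $\sigma\tau = \tau\sigma^{-1}$, combined with the facts $\tau^{2}=\mathrm{id}$ (Proposition \ref{3P2}) and $\sigma^{n}=\mathrm{id}$ read off from the matrix description of $\mathrm{Syz}$. The only place you go beyond the paper is your explicit concern about the exact order of $\sigma$ and the pairwise distinctness of the $2n$ operators modulo the cyclic-permutation equivalence --- a faithfulness point the paper passes over in silence --- so your attempt is, if anything, slightly more careful than the original.
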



\begin{thebibliography}{99}


\bibitem{eis} D.~Eisenbud, \emph{Homological algebra on a complete intersection, with an application to group representations}. Trans.~Am.~Math.~Soc.~\textbf{260}(1)~(1980), 35--64.

\bibitem{gm} E.~Gorsky, M.~Mazin, \emph{Compactified Jacobians and $q,t$-Catalan numbers, II}. To appear in J.~Alg.~Comb., arXiv:1204.5448 (2013)

\bibitem{mu} J.~J.~Moyano-Fern\'andez, J.~Uliczka, \emph{Hilbert depth of graded modules over polynomial rings in two variables}. J.~of~Algebra \textbf{373} (2013)~130--152.

\bibitem{mu2} J.~J.~Moyano-Fern\'andez, J.~Uliczka, \emph{Lattice paths with given number of turns and semimodules over 
numerical semigroups}. Preprint, arXiv:1209.4797 (2013)

\bibitem{pio} J.~Piontkowski, \emph{Topology of the Compactified Jacobians of Singular Curves}. Math. Z. \textbf{255}(1) (2007), 195--226.

\bibitem{rosales} J.~C.~Rosales, \emph{Fundamental gaps of numerical semigroups generated by two elements}.
                  Linear Algebra and its Appl.~\textbf{405}~(2005)~200--208.
                  

\end{thebibliography}
\end{document}